\newcommand\redout{\bgroup\markoverwith
{\textcolor{red}{\rule[.5ex]{2pt}{0.4pt}}}\ULon}
\tikzset{%
  highlight/.style={rectangle,rounded corners,fill=red!15,draw,
    fill opacity=0.5,thick,inner sep=0pt}
}
\newcommand{\tikzmark}[2]{\tikz[overlay,remember picture,
  baseline=(#1.base)] \node (#1) {#2};}
\newcommand{\Highlight}[1][submatrix]{%
    \tikz[overlay,remember picture]{
    \node[highlight,fit=(left.north west) (right.south east)] (#1) {};}
}
\newcolumntype{x}[1]{>{\centering\arraybackslash\hspace{0pt}}p{#1}}
\theoremstyle{definition}
\newtheorem{theorem}{Theorem}[section]
\newtheorem{definition}[theorem]{{{Definition}}}
\newtheorem{example}[theorem]{{{Example}}}
\newtheorem{notation}[theorem]{{{Notation}}}
\newtheorem{remark}[theorem]{{{Remark}}}
\newtheorem{corollary}[theorem]{{{Corollary}}}%[theorem]
\newtheorem{proposition}[theorem]{{{Proposition}}}
\newtheorem{lemma}[theorem]{{{Lemma}}}
\newtheorem{construction}{{{Construction}}}
\newcommand{\C}{\mathcal C}
\newcommand{\Fl}{\mathfrak{F}l}
\newcommand{\D}{\mathcal{D}}
\newcommand{\F}{\mathbb F}
\DeclareMathOperator{\GL}{GL}
\DeclareMathOperator{\supp}{supp}
\DeclareMathOperator{\Mat}{Mat}
\DeclareMathOperator{\PG}{PG}
\DeclareMathOperator{\rk}{rk}
\DeclareMathOperator{\U}{U}
\DeclareMathOperator{\dd}{d}
\DeclareMathOperator{\ff}{fr}
\DeclareMathOperator{\rr}{r}
\DeclareMathOperator{\w}{w}
\DeclareMathOperator{\Gr}{Gr}
\DeclareMathOperator{\rs}{rowsp}
\DeclareMathOperator{\diag}{diag}
\DeclareMathOperator{\wt}{wt}
\DeclareMathOperator{\HH}{H}
\DeclareMathOperator{\Ss}{SS}
\newcommand{\df}{\dd_{\ff}}
\newcommand{\dr}{\dd_{\rr}}
\newcommand{\wf}{\w_{\ff}}
\newcommand{\MM}[2]{{#1}_{[#2]}}
\newcommand{\ntdf}{\{n,t,\delta\}_\F}
\newcommand{\st}{\,:\,}
\newcommand{\ntt}[1]{\{#1\}_{\F}}
\title{Maximum Flag-Rank Distance Codes}
\author[G. N. Alfarano]{Gianira N. Alfarano}
\address{Gianira N. Alfarano, \textnormal{Department of Mathematics and Computer Science, Eindhoven University  of Technology, 
 De Groene Loper 5, 5612 AZ Eindhoven, the Netherlands}}
 \email{g.n.alfarano@tue.nl}
\author[A. Neri]{Alessandro Neri}
\address{Alessandro Neri, \textnormal{Department of Mathematics: Analysis, Logic and Discrete Mathematics, Ghent University, Krijgslaan 281, 9000
Gent, Belgium,  and Max-Planck-Institute for Mathematics in the Sciences, Inselstraße 22, 04103 Leipzig, Germany}}
 \email{alessandro.neri@ugent.be}
\author[F. Zullo]{Ferdinando Zullo}
\address{Ferdinando Zullo, \textnormal{Department of Mathematics and Physics, University of Campania ``Luigi Vanvitelli'', Viale Lincoln 5, 81100 Caserta, Italy}}
 \email{ferdinando.zullo@unicampania.it}
\begin{document}

\maketitle

\begin{abstract}
  In this paper we extend the study of linear spaces of upper triangular matrices endowed with the flag-rank metric. Such metric spaces are isometric to certain spaces of degenerate flags and have been suggested as suitable framework for network coding. In this setting we provide a Singleton-like bound which relates the parameters of a flag-rank-metric code. This allows us to introduce the family of maximum flag-rank distance codes, that are flag-rank-metric codes meeting the Singleton-like bound with equality. Finally, we provide several constructions of maximum flag-rank distance codes.
\end{abstract}

\medskip 
\noindent {\bf Keywords.} Network coding, degenerate flag variety, flag-rank-metric codes, upper triangular matrices.\\
{\bf MSC classification.} 94B65, 94B05, 15B99.

%%%%%%%%%%%%%%%%%%%%%%%%%%%%%%%%%%%%%%%%%%%%%%%%%%%%%%%%%%%%%%%%%%%%%%%%%%%%%%%%%%%%%%%%%%%%%%%%%%%%%%%%%%%%%%%%%%%%%%%%%%%%%%%%%%%%%%%%%%%%%%%%%%%%%%%%%%%%%%%%%%%%%%%%%%%%%%%%%%%%%%%%%%%%%%%%%%%%%

\section*{Introduction}
The traditional approach to network communication has undertaken a crucial change when the seminal paper \cite{ahlswede2000network} by Ahlswede \emph{et al.} came out with the idea that intermediate nodes can combine the inputs received  and forward the new messages. This networking technique is known nowadays as \emph{network coding}: transmitted data are encoded and decoded in order to increase network throughput, reduce delays and make the network robust. It regards situations where one source attempts to transmit several messages simultaneously to multiple terminals. 
In \cite{koetter2008coding}, K\"otter and Kschischang introduced the concept of transmitting information over a network encoded in subspaces. In this framework, the message alphabet is the set of all subspaces of a vector space $\F^n$ over a field $\F$, and a \emph{subspace code} is a set of subspaces of $\F^n$. The source sends a (basis of a) vector space, the receiver gathers a (basis of a) vector space possibly affected by noise. Subspace codes gained a lot of attention in the past years; see for instance \cite{horlemann2018constructions} and references therein. In many of the previous works, \emph{constant dimension} codes are considered, i.e. subspace codes in which all the codewords are vector subspaces having the same dimension. More formally, we have the following setup.
Let $n,k$ be positive integers with $1\leq k\leq n$, and denote by $\Gr_k(\F^n)$ the \emph{Grassmannian}, i.e. the set of all $k$-dimensional subspaces of $\F^n$. The Grassmannian $\Gr_k(\F^n)$ can be interpreted as a metric space endowed with the \emph{subspace distance}, defined for every $U,V\in\Gr_k(\F^n)$ as
$$ \dd_{\mathrm{S}}(U,V) = \dim_{\F}(U+V) -\dim_{\F}(U\cap V).$$
Notice that $ \Gr_k(\F^n)\cong \{A\in\Mat(k,n,\F) \st \rk(A)=k\}/\GL(k,\F)$, where $\Mat(k,n,\F)$ denotes the space of $k\times n$ matrices with entries in $\F$. In other words,  $\Gr_k(\F^n)$ can be represented by the set of $k\times n$ matrices of rank $k$ in \emph{reduced row echelon form}. Using this representation, we can naturally  partition $\Gr_k(\F^n)$  into \emph{cells}, according to the pivot positions of the matrices in reduced row echelon form. The largest cell in this decomposition is the set 
$$\Gr_k^0(\F^n)=\{U\in\Gr_k(\F^n) \st U=U_A=\rs(\; \mathrm{Id}_{k} \,\mid\, A \;), \; \textnormal{ for  }A\in\Mat(k,n-k,\F)\}.$$
In this cell we have that $\dd_{\mathrm{S}}(U_A,U_B) = 2\cdot\rk(A-B)$, that is, the metric spaces $(\Gr_k^0(\F^n),\dd_{\mathrm{S}})$  and $(\Mat(k,n-k,\F),2\cdot\dd_{\rk})$ are isometric, where $\dd_{\rk}$ is the distance function induced by the rank.
This point of view has the advantage  to add algebraic structure to constant dimension codes, which then can inherit an $\F$-vector space structure. This observation motivated a deep study of (linear) rank-metric codes, that is, linear spaces of matrices endowed with the rank distance; see \cite{sheekey201913} and references therein.
In \cite{liebhold2018network}, linear flag coding was suggested as substitute of subspace coding in linear network coding, where \emph{flag codes} can be seen as a generalization of constant dimension codes. 
This idea raised the interest of the scientific community, yielding to different research lines; see e.g. \cite{alonso2020flag, alonso2021optimum, alonso2021orbital, alonso2021cyclic, navarro2022flag, alonso2023flag,kurz2021bounds,stanojkovski2022submodule}.
More precisely, consider the \emph{flag variety} $\Fl(\F^{n+1})$ defined as the space 
$$\Fl(\F^{n+1})=\left\{(U_1,\ldots,U_{n})\in\prod_{i=1}^{n}\Gr_i(\F^{n+1}) \st U_1\subseteq U_2\subseteq\cdots\cdots \subseteq U_{n}\right\}.$$
A \emph{flag code} is a subset of $\Fl(\F^{n+1})$, whose codewords are hence flags  $(U_1,\ldots,U_{n})\in\Fl(\F^{n+1})$, where 
$U_i=\langle u_1,\ldots,u_i\rangle$ for all $i\in\{1,\ldots,n\}$.
Also $\Fl(\F^{n+1})$ can be partitioned into cells,  and the largest one is $\Fl^0(\F^{n+1})=\Fl(\F^{n+1})\cap \prod_{i=1}^{n}\Gr_i^0(\F^{n+1})$.
In \cite{fourier2021degenerate}, a slightly different variant has been proposed, where the flag variety is replaced by a natural degeneration, $\Fl^{(a)}(\F^{n+1})$, that we will define in more detail in Section \ref{sec:FRM}. This concept was introduced in \cite{feigin2012degeneration} under a pure algebraic perspective. From a linear algebra point of view, this degeneration has been studied in \cite{irelli2015degenerate}. 
In these works the authors illustrate also that  $\Fl^{(a)}(\F^{n+1})$ can be partitioned into cells, and the biggest cell in the decomposition is $\Fl^{(0a)}(\F^{n+1})=\Fl^{(a)}(\F^{n+1}) \cap \prod_{i=1}^{n}\Gr_i^0(\F^{n+1})$. 
 Furthermore, it is a metric affine space isometric to the space of upper triangular matrices endowed with the flag-rank metric; see Definition~\ref{def:frd} and Theorem~\ref{thm:isometry}. This observation motivates the development of a theory of flag-rank-metric codes in analogy to rank-metric codes, initiated by Fourier and Nebe in \cite{fourier2021degenerate}.

\medskip
\paragraph{\textbf{Our Contribution}}
The goal of this paper is to move forward on the theory of flag-rank-metric codes. These codes are linear subspaces of the space $\U(n,\F)$ of $n\times n$ upper triangular matrices over the field $\F$ endowed with the flag-rank distance. For a given $n \times n$ matrix $M$ and for $i \in \{1,\ldots,n\}$, denote by $M_{[i]}$ the  top-rightmost $i \times (n-i+1)$ submatrix of $M$. The flag-rank distance is induced by the flag-rank weight, which is defined on $\U(n,\F)$ as
$$ \wf(M)=\sum_{i=1}^n\rk(M_{[i]}). $$
In our treatment, as a first step we propose a Singleton-like bound that relates the parameters of a flag-rank-metric code; see Theorem \ref{th:Singleton_codimension}. In particular, we show that
for a linear subspace $\C\subseteq \U(n,\F)$ whose nonzero elements have all flag-rank weight at least $\delta$, it holds
$$\mathrm{codim}_{\F}(\C)\geq \delta-1+\lfloor{\sqrt{\delta-1}}\rfloor \cdot \left\lfloor{\frac{\sqrt{4(\delta-1)+1}-1}{2}}\right\rfloor.$$
This result extends the one obtained in \cite{fourier2021degenerate}, where the authors derived the bound only for the case $\delta=\left\lceil\frac{n}{2} \right\rceil\cdot \left\lceil\frac{n+1}{2} \right\rceil$.

 The existence of a Singleton-like bound raises the natural questions of existence and features of codes whose parameters meet the bound with equality, as it is common practice in algebraic coding theory. Therefore, we initiate  their study under the name of \emph{maximum flag-rank distance} (\emph{MFRD}) \emph{codes}.
  The only construction of MFRD codes known so far was given by Fourier and Nebe for $\delta=\left\lceil\frac{n}{2} \right\rceil\cdot \left\lceil\frac{n+1}{2} \right\rceil$ and $n$ odd. Using algebraic, geometric and combinatorial tools we obtain various new constructions of MFRD codes for more parameters. More precisely:
 \begin{enumerate}
     \item We extend Fourier and Nebe's construction of MFRD codes for $\delta=\left\lceil\frac{n}{2} \right\rceil\cdot \left\lceil\frac{n+1}{2} \right\rceil$  to $n$ even; see Construction \ref{constr:A}.
     \item We provide a construction of MFRD codes for $\delta=\left\lceil\frac{n}{2} \right\rceil\cdot \left\lceil\frac{n+1}{2} \right\rceil -1$ when $n$ is odd; see Construction \ref{constr:B}.
     \item We classify MFRD codes with $\delta=2$; see Section \ref{sec:d=2}.
     \item We characterize MFRD codes with $\delta=3$, introducing the geometric notion of \emph{support-avoiding} codes in the Hamming metric; see Construction \ref{constr:d=3}.
     \item We present a construction of MFRD codes with the aid of auxiliary maximum distance separable (MDS) codes in the Hamming metric and maximum rank distance (MRD) codes in the rank-metric; see Construction \ref{constr:C}.
 \end{enumerate}
These constructions are all based on different approaches and exploit a wide variety of tools: We use the theory of rank-metric codes and MRD codes, the theory of linear block codes in the Hamming metric and MDS codes, as well as several geometric objects such as caps in a  projective space. 

\medskip
\paragraph{\textbf{Outline}} The paper is organized as follows. In Section \ref{sec:preliminaries} we provide the necessary background material. In Section \ref{sec:FRM} we introduce flag-rank-metric codes. Section \ref{sec:singletonlikebound} is devoted to prove the Singleton-like bound for flag-rank-metric codes. In Section \ref{sec:MFRC} we exhibit several systematic constructions of MFRD codes. We conclude with some computational results and  open problems in Section \ref{sec:conclusion}.

\medskip

\paragraph{\textbf{Notation}} Throughout this paper $\F$ denotes any field, $n\geq 2$ is a fixed positive integer and $k:=\lceil\frac{n}{2}\rceil$. For $i \in \mathbb{N} =\{0,1,2, \ldots\}$ we let $[i]:=\{j \in \mathbb{N} \, : \, 1 \le j \le i\}$. We denote by $\Mat(\ell,m,\F)$ the space of $\ell \times m$ matrices with entries in $\F$ and, in case of square $\ell\times \ell$ matrices, we simply write $\Mat(\ell,\F)$. Moreover, $\U(n,\F)$ denotes the space of upper triangular $n\times n$ matrices with entries in  $\F$. 
For a matrix $M\in \Mat(n,m,\F)$ we denote by $\mathrm{rowsp}(M)$ the rowspace of $M$ over $\F$, that is the $\F$-subspace of $\F^m$ generated by the rows of $M$. 
If $A \in \Mat(n,m,\F)$, then $A_{I}^J$ denotes the submatrix formed by selecting the rows of $A$ indexed by $I \subseteq [n]$ and the columns of $A$ indexed by $J \subseteq [m]$. 
We denote by $\PG(t,\F)$ the $t$-dimensional projective space with underlined vector space $\F^{t+1}$.

\medskip

\paragraph{\textbf{Acknowledgement}}
G.~N.~A.\
is supported by the Swiss National Foundation through grant no. 210966. A.~N. is supported by   the FWO (Research Foundation Flanders) grant no. 12ZZB23N. 
F.Z. is supported by the project ``VALERE: VAnviteLli pEr la RicErca" of the University of Campania ``Luigi Vanvitelli'' and by the Italian National Group for Algebraic and Geometric Structures and their Applications (GNSAGA - INdAM). Moreover, he is very grateful for the hospitality of University of Zurich, Switzerland, and Max-Planck-Institute for Mathematics in the Sciences, Germany, where he was a visiting researcher for two weeks during the development of this research.

\section{Preliminaries}\label{sec:preliminaries}
In this section we provide the background for the rest of the paper. We first give a brief introduction to classic error-correcting codes and rank-metric codes. For a more detailed treatment on Hamming-metric codes and rank-metric codes we refer the interested reader to \cite{huffman2010fundamentals, sheekey201913, gorla2018codes}.

\subsection{Hamming-metric codes}
For any $v=(v_1,\ldots,v_n)\in\F^n$, we define the \textbf{Hamming support} of $v$ to be the set $\supp(v):=\{ i \in [n] \st v_i \neq 0\}$ and the \textbf{Hamming weight} of $v$ to be the quantity $\wt_{\HH}(v)=|\supp(v)|$. The latter naturally induces a metric on $\F^n$, called the \textbf{Hamming distance} and given by 
$$\dd_{\HH}(u,v):=\wt_{\HH}(u-v), \quad \mbox{ for every } u,v \in \F^n.$$  An $[n,k]_{\F}$ (\textbf{linear Hamming-metric}) \textbf{code} is an $\F$-linear subspace $\C \subseteq \F^n$ of dimension $k$ endowed with the Hamming metric. The vectors in $\C$ are called \textbf{codewords}. The \textbf{minimum Hamming distance} of $\C$ is defined as
$$\dd_{\HH}(\C):=\min\{\wt_{\HH}(c) \st c \in \C, \, c \neq 0\}.$$  If $d=\dd_{\HH}(\C)$ is known, we say that $\C$ is an $[n,k,d]_\F$ code. A matrix $G\in\Mat(k,n,\F)$ whose rows form a basis for $\C$ is called a \textbf{generator matrix} for $\C$.

A linear code $\C \subseteq \F^n$ cannot have simultaneously large minimum distance and large dimension. Several results in the literature capture this trade-off. One of the best-known relations among the parameters of an $[n,k,d]_\F$ code is undoubtedly the Singleton bound; see \cite{singleton1964maximum}. Let $\C \subseteq \F^n$ be an $[n,k,d]_\F$ code. The Singleton bound states that 
$$k \le n-d+1.$$
Codes whose parameters meet the Singleton bound with equality are called \textbf{maximum distance separable} or \textbf{MDS} for short and have the highest error-correction capability among the codes of same length and dimension. For an $[n,k,d]_\F$ code $\C$, we can define a parameter $s(\C) =n-d+1-k$ called \textbf{Singleton defect}, which measures how far  $\C$ is from being MDS.
Clearly, an MDS code has Singleton defect equal to $0$. A code $\C$ with $s(\C)=1$ is called \textbf{almost MDS}. For more details about these codes we refer the interested reader to \cite{de1996almost}.

Given an $[n,k]_\F$ code $\C$, we define its \textbf{dual code} $\C^\perp$ to be the dual space with respect to standard dot product, i.e.
$$ \C^\perp =\{v\in\F^n \st v\cdot c =0 \; \mbox{ for all } c\in \C\}.$$
A generator matrix $H\in \Mat(n-k,n,\F)$ for $\C^\perp$ is called \textbf{parity-check matrix} for the code~$\C$.

\subsection{Rank-metric codes}
Rank-metric codes have been introduced originally by Delsarte in~\cite{de78} and they have been intensively investigated in recent years because of their applications in crisscross error correction \cite{roth1991maximum}, cryptography \cite{gabidulin1991ideals} and network coding \cite{silva2008rank}; see \cite{bartz2022rank} for a survey on their applications. 
We endow the space $\Mat(n, m,\F)$ with the \textbf{rank metric}, defined as
\[\dr(A,B) = \mathrm{rk}\,(A-B), \qquad \mbox{ for every } A,B\in\Mat(n, m,\F).\]
 An $[n\times m,k]_\F$ \textbf{(linear) rank-metric code} $\C$ is a $k$-dimensional $\F$-subspace  of $\Mat(n,m,\F)$. The \textbf{minimum rank distance} of $\C$ is defined as
 \begin{align*}
     \dr(\C):= &\min\{ \dr(A,B) \st A,B \in \C,\,\, A\neq B \}\\
     = &\min\{ \rk(A) \st A \in \C,\,\, A\neq 0 \}.
 \end{align*}
If $d=\dr(\C)$ is known, we write that $\C$ is an $[n\times m,k,d]_\F$ code.
The parameters $n,m,k,d$ of an $[n\times m,k,d]_\F$ rank-metric code satisfy a Singleton-like bound \cite{de78}, that reads~as
\[ k \leq \max\{m,n\}(\min\{m,n\}-d+1). \]
When equality holds, we say that $\C$ is a \textbf{maximum rank distance} code or \textbf{MRD} for short.
Contrary to MDS codes which exist only over sufficiently large fields, it has been shown that MRD codes exist for any (admissible) choice of $n,m,d$,  over any finite field \cite{de78,ga85a}, and over any field admitting a degree $n$ or $m$ cyclic Galois extension \cite{guralnick1994invertible,roth1996tensor}.

For a rank-metric code $\C\subseteq\Mat(n,m,\F)$ and matrices $A\in \Mat(n,\F)$, and $B\in\Mat(m,\F)$, we define
\[ A\C=\{AC \st C \in \C\}, \quad \C B=\{CA \st C\in \C\}.\]
We recall the following two codes associated with $\C$, obtained via the puncturing operation.
\begin{definition}
    Let $A\in\GL(n,\F)$ and $B\in\GL(m,\F)$. Let $I\subseteq [n]$ and $J\subseteq [m]$ be such that $0<|I|<n$ and $0<|J|<m$. We define the \textbf{row-punctured} code of $\C$ with respect to $A$ and $I$ as
    $$\Pi^r(\C,A,I)=\{(AC)_I\st C\in\C\}\subseteq \Mat(|I|,m,\F),$$
   and the \textbf{column-punctured} code of $\C$ with respect to $B$ and $J$ as
     $$\Pi^c(\C,B,J)=\{(CB)^J\st C\in\C\}\subseteq \Mat(n,|J|,\F).$$
     In other words, $\Pi^r(\C,A,I)$ is the set of all matrices in $A\C$ obtained by deleting the rows not in $I$ and $\Pi^c(\C,B,J)$ is the set of all matrices in $\C B$ obtained by deleting the columns not in~$J$.
\end{definition}

\section{Flag-rank-metric codes}\label{sec:FRM}
In this section we  introduce the notion of \emph{flag-rank-metric codes}, starting from \emph{degenerate flag varieties}. 
In order to define such a degeneration, we follow the notation of \cite{irelli2015degenerate} and \cite{fourier2021degenerate}. Let $\{v_1,\ldots, v_{n+1}\}$ be a basis of $\F^{n+1}$ and consider the $\F$-linear \textbf{projections}
\begin{equation*}
    \mathrm{pr}_{i}:\F^{n+1}\to \F^{n+1}, \; v_j\mapsto \begin{cases}
        v_j, & \textnormal{ if } j\ne i\\
        0, & \textnormal{ if } j= i.
    \end{cases}
\end{equation*}
With respect to $\{v_1,\ldots,v_{n+1}\}$, each $\mathrm{pr}_i$ acts as the right multiplication by the diagonal matrix
$$D_i=\diag(1,1,\ldots, 1,0,1,\ldots, 1)\in\Mat({n+1},\F),$$
having $0$ in the position $(i,i)$.

\noindent
The \textbf{degenerate flag variety} $\Fl^{(a)}(\F^{n+1})$ is the space defined as
$$\Fl^{(a)}(\F^{n+1})=\left\{(V_1,\ldots,V_{n})\in\prod_{i=1}^{n}\Gr_i(\F^{n+1})\st \mathrm{pr}_{i+1}(V_i)\subseteq V_{i+1}, \; \textnormal{for all } i\in[n-1]\right\}.$$
$\Fl^{(a)}(\F^{n+1})$ is a metric space endowed with the component-wise sum-subspace distance $\dd_{\Ss}$, i.e. 
$$\dd_{\Ss}((U_1,\ldots,U_{n}),(V_1,\ldots,V_{n})) = \sum_{i=1}^{n}\dd_{\mathrm{S}}(U_i,V_i).$$
Furthermore, $\Fl^{(a)}(\F^{n+1})$ can be partitioned into cells, where the largest one is 
$$\Fl^{(0a)}(\F^{n+1})=\Fl^{(a)}(\F^{n+1}) \cap \prod_{i=1}^{n}\Gr_i^0(\F^n).$$
Let $(V_1,\ldots,V_{n})\in\Fl^{(0a)}(\F^{n+1})$. For every $i\in[n]$ we can write  
$$V_i=\rs(\; \mathrm{Id}_{i} \, \mid \, A_i \; ), \quad \textnormal{ for a unique } A_i\in\Mat(i,n-i+1,\F).$$
The condition $\mathrm{pr}_{i+1}(V_i)\subseteq V_{i+1}$ translates to 
$$ A_{i+1} = \begin{pmatrix}
    \overline{A}_i \\
    x_{i+1}
\end{pmatrix}, \quad \textnormal{ for some } x_{i+1}\in\F^{n-i},$$
where $ \overline{A}_i\in\Mat(i,n-i,\F)$ is the matrix obtained from $A_i$ by deleting the first column.

 We introduce the following notation which will be helpful for describing a
parametrization of $\Fl^{(0a)}(\F^n)$ via upper triangular matrices.

\begin{notation}
For a given matrix $M \in \Mat(n,\F)$ and an integer $i\in[n]$, we denote by $\MM{M}{i}$ the top-rightmost $i \times (n-i+1)$ submatrix of $M$, that is the one obtained by removing the first $i-1$ columns and the last $n-i$ rows from $M$. Furthermore, if $\C \subseteq \Mat(n,\F)$, we denote by $\MM{\C}{i}$ the set
$$ \MM{\C}{i} \coloneqq \left\{\MM{M}{i} \st M \in \C \right\} \subseteq \Mat(i,n-i+1,\F).$$
\end{notation}

\begin{definition}
    Let $\varphi^{(a)} : \Fl^{(0a)}(\F^{n+1}) \to \U(n,\F)$ be the map given by $\varphi^{(a)}(V_1,\ldots,V_{n})=X$, where
    $X_{[i]}=A_i$ and $\rs(\; \mathrm{Id}_{i}\mid A_i \;)=V_i$. The map $\varphi^{(a)}$ is a bijection from $\Fl^{(0a)}(\F^{n+1})$ to  $\U(n,\F)$ and its inverse map is 
    $$\left(\varphi^{(a)}\right)^{-1}(X) = (V_1,\ldots,V_{n}),$$
    where $V_i=\rs( \; \mathrm{Id}_{i}\mid X_{[i]} \;)$.
\end{definition}

\begin{example}
    Let $n=4$ and let $\F=\mathbb{Q}$ be the field of rational numbers.
    Consider the degenerate flag $(V_1,V_2,V_3,V_4)$ where
    \begin{align*}
        V_1 &=\rs\begin{pmatrix}
            1 & 0 & 1 & 2 & 3 
        \end{pmatrix}, &V_2 =\rs\begin{pmatrix}
            1 & 0 & 1 & 2 & 3 \\
            0 & 1 & 4 & 5 & 6
        \end{pmatrix},\\
        V_3 &=\rs\begin{pmatrix}
            1 & 0 & 0 & 2 & 3 \\
            0 & 1 & 0 & 5 & 6 \\
            0& 0 & 1 & 0 & 8
        \end{pmatrix},  &V_4=\rs\begin{pmatrix}
            1 & 0 & 0 & 0 & 3 \\
            0 & 1 & 0 & 0 & 6 \\
            0& 0 & 1 & 0 & 8 \\
            0 & 0 & 0 & 1 & 0
        \end{pmatrix}.
    \end{align*}
    Note that $(V_1,\ldots,V_4)$ is not a flag since $V_2\not\subseteq V_3$, but it is not difficult to see that for every $i\in\{1,2,3\}$ we have that $\mathrm{pr}_{i+1}(V_i)\subseteq V_{i+1}$.
    Moreover $\varphi^{(a)} : \Fl^{(0a)}(\mathbb{Q}^5) \to \U(4,\mathbb{Q})$ is such that 
    $$\varphi^{(a)}(V_1,V_2,V_3,V_4)=\begin{pmatrix}
        0 & 1 & 2 & 3 \\
        0 & 4 & 5 & 6 \\
        0 & 0 & 0 & 8\\
        0 & 0 & 0 & 0    \end{pmatrix}\in\U(4,\mathbb{Q}).$$
\end{example}

In order to obtain an isometry with respect to the sum-subspace distance $\dd_{\mathrm{SS}}$ we define the flag-rank distance on $\U(n,\F)$ as follows.

\begin{definition}\label{def:frd}
The \textbf{flag-rank weight} of a matrix $M\in \U(n,\F)$ is the quantity
$$\w_{\ff}(M_{[i]}):=\sum_{i=1}^{n}\rk(M_{[i]}).$$
It induces the \textbf{flag-rank distance} on $\U(n,\F)$ defined as the map 
$$ \begin{array}{rccc}\df:&\U(n,\F) \times \U(n,\F)& \longrightarrow & \mathbb N \\ & (A,B) & \longmapsto & \w_{\ff}(A-B).
\end{array}$$
\end{definition}

The following result from \cite{fourier2021degenerate} points out the isometric relation between degenerate flag varieties and upper triangular matrices.

\begin{theorem}[{\cite[Theorem 2.5]{fourier2021degenerate}}]\label{thm:isometry}
    For every $(U_1,\ldots,U_{n}), (V_1,\ldots,V_{n}) \in \Fl^{(0a)}(\F^{n+1})$, we have 
    $$ \dd_{\Ss}((U_1,\ldots,U_{n}), (V_1,\ldots,V_{n})) = 2\cdot\df(\varphi^{(a)}(U_1,\ldots,U_{n}),\varphi^{(a)}(V_1,\ldots,V_{n})),$$
    i.e. the metric spaces $(\Fl^{(0a)}(\F^{n+1}), \dd_{\Ss})$ and $(\U(n,\F),2\cdot\df)$ are isometric. 
\end{theorem}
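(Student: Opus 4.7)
My plan is to show the claimed identity termwise in $i$, reducing everything to the Grassmannian-cell isometry already recalled in the introduction of the paper, namely that $(\Gr_k^0(\F^n), \dd_{\mathrm{S}})$ is isometric to $(\Mat(k, n-k, \F), 2\cdot \dd_{\rk})$.

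Concretely, write $X = \varphi^{(a)}(U_1,\ldots,U_n)$ and $Y = \varphi^{(a)}(V_1,\ldots,V_n)$, and set $A_i := X_{[i]}$ and $B_i := Y_{[i]}$, so that by definition of $\varphi^{(a)}$ we have $U_i = \rs(\mathrm{Id}_i \mid A_i)$ and $V_i = \rs(\mathrm{Id}_i \mid B_i)$, with $A_i, B_i \in \Mat(i, n-i+1, \F)$. In particular $U_i, V_i \in \Gr_i^0(\F^{n+1})$ for every $i \in [n]$, so the Grassmannian-cell isometry recalled in the introduction applies to each pair $(U_i, V_i)$ and yields
$$\dd_{\mathrm{S}}(U_i, V_i) \;=\; 2 \cdot \rk(A_i - B_i) \;=\; 2 \cdot \rk\bigl((X-Y)_{[i]}\bigr).$$

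Summing over $i \in [n]$ and using the definition of $\dd_{\Ss}$ on the left and of $\wf$ (hence $\df$) on the right gives
$$\dd_{\Ss}\bigl((U_1,\ldots,U_n), (V_1,\ldots,V_n)\bigr) \;=\; \sum_{i=1}^n 2 \cdot \rk\bigl((X-Y)_{[i]}\bigr) \;=\; 2 \cdot \df(X, Y),$$
which is exactly the claimed identity. Since $\varphi^{(a)}$ was already stated to be a bijection, this identity upgrades to the asserted isometry of metric spaces.

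The only step that requires any verification is the per-cell identity $\dd_{\mathrm{S}}(U_A, U_B) = 2 \rk(A-B)$ for $U_A = \rs(\mathrm{Id}_k \mid A)$ and $U_B = \rs(\mathrm{Id}_k \mid B)$. This is a short computation: row-reducing $\begin{pmatrix} \mathrm{Id}_k & A \\ \mathrm{Id}_k & B\end{pmatrix}$ to $\begin{pmatrix} \mathrm{Id}_k & A \\ 0 & B-A\end{pmatrix}$ shows $\dim(U_A + U_B) = k + \rk(B - A)$, and then Grassmann's identity gives $\dim(U_A \cap U_B) = k - \rk(B - A)$, so $\dd_{\mathrm{S}}(U_A, U_B) = 2\rk(B-A)$. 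The author of the source paper may well take this for granted or appeal directly to the introduction. There is no real obstacle here; the main conceptual point is simply that both metrics decompose as sums over $i$ with matching $i$-th summands, so the theorem is a termwise consequence of a fact already established at the level of a single Grassmannian cell.
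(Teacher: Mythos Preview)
Your argument is correct and is exactly the natural one: reduce to the per-$i$ identity $\dd_{\mathrm{S}}(U_{A_i},U_{B_i})=2\rk(A_i-B_i)$ on the big cell of each Grassmannian and sum. Note, however, that the present paper does not give its own proof of this theorem; it is quoted verbatim from \cite[Theorem~2.5]{fourier2021degenerate}, so there is nothing in the paper to compare your proof against beyond the introductory remark on the isometry $(\Gr_k^0(\F^n),\dd_{\mathrm{S}})\cong(\Mat(k,n-k,\F),2\dd_{\rk})$ that you already invoke.
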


Note that in \cite[Section 5]{fourier2021degenerate}, the authors highlight the advantages of using degenerate flags instead of flags in network coding. In particular, network coding operations in $\Fl^{(a0)}(\F^{n+1})$ involve less computations than
the ones in $\Fl^{(0)}(\F^{n+1})$ due to the underlying vector space structure of $\U(n,\F)$. As linear network coding motivated the study of rank-metric codes, Theorem~\ref{thm:isometry} motivates the introduction of flag-rank-metric codes, which are defined as follows. 

\begin{definition}
A \textbf{(linear) flag-rank-metric code} is a $t$-dimensional $\F$-subspace $\C$ of $\U(n,\F)$ endowed with the flag-rank distance. The \textbf{minimum flag-rank distance} of $\C$ is the quantity
$$ \delta=\df(\C):=\min\{\wf(M) \st M\in \C, M\neq 0 \}.$$
We will refer to $\C$ as an $\ntdf$ code. 
\end{definition}

\begin{remark}\label{rem:wmax}
It is immediate to see that the maximum possible flag-rank weight that a matrix $M\in\U(n,\F)$ can have is 
$$ \w_{\max}(n) =\begin{cases}
k^2 & \textnormal{ if } n=2k-1, \\
k(k+1) & \textnormal{ if } n=2k.
\end{cases}$$
An $\ntdf$ code with $\delta=\w_{\max}(n)$ is said to be \textbf{optimum-distance}. 
\end{remark}

%%%%%%%%%%%%%%%%%%%%%%%%%%%%%%%%%%%%%%%%%%%%%%%%%%%%%%%%%%%%%%%%%%%%%%%%%%%%%%%%%%%%%%%%%%%%%%%%%%%%
\section{A Singleton-like bound}\label{sec:singletonlikebound}

In this section we prove a Singleton-like bound for linear flag-rank-metric codes. The idea of the proof is inspired by  \cite[Proposition~1]{fourier2021degenerate} and \cite[Proposition~2]{fourier2021degenerate}, where a bound is provided in the case of optimum-distance codes. 

Let $\C$ be an $\ntt{n,t,\delta}$ flag-rank-metric code and let $k=\left\lceil\frac{n}{2}\right\rceil$.
In order to simplify the proof, we will write the parameters of the code as follows, according to whether $n$ is even or odd.

\noindent
If $n=2k$, we consider the unique $s,\ell$ and $j$ such that \[k(k+1)-\delta=\underbrace{k+k+\ldots+(k-\ell+1)}_{s \text{ summands }}+j,\]
where $0\leq j < k-\lfloor \frac{s}2\rfloor$.
If $n=2k-1$, we consider the unique $s,\ell$ and $j$ such that
\[k^2-\delta=\underbrace{k+(k-1)+(k-1)+\ldots+(k-\ell)}_{s \text{ summands }}+j,\]
where $0\leq j < k-\lfloor \frac{s}2\rfloor$.

The main idea for proving the Singleton-like bound for an $\ntt{n,t,\delta}$ code $\C$ relies on the projection of $\C$ on the last $s$ columns and part of the $(n-s)$-th column. 
We first state a more convoluted version of our Singleton-like bound, which will be then useful to deduce a more compact and elegant version afterwards. 

\begin{proposition}\label{pr:Singleton}
Let $\mathcal{C}\subseteq \U(n,\F)$ be a linear flag-rank-metric code with minimum distance $\delta$.
Let $s,\ell$ and $j$ be as above.
Then
\begin{equation*}
    \dim_{\F}(\mathcal{C})\leq n+(n-1)+\ldots+(n-s+1)+\alpha(s),
\end{equation*} 
where
\[ \alpha(s)=\begin{cases}
\frac{n-s}2+j+1, & \text{if $n-s$ is even},\\
\frac{n-s+1}2+j, & \text{if $n-s$ is odd}.
\end{cases}
\]
\end{proposition}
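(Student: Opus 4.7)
\emph{Proof plan.} The strategy is to construct a linear coordinate projection $P\colon\U(n,\F)\to\F^{N}$, where $N$ equals the right-hand side of the claimed inequality, and show that the restriction $P|_{\C}$ is injective; this immediately yields $\dim_{\F}\C\le N$. Concretely, I would let $S$ be the subset of positions of an upper triangular matrix consisting of (i) all entries of the last $s$ columns, which accounts for $n+(n-1)+\dots+(n-s+1)$ positions, together with (ii) the top $\alpha(s)$ entries of the $(n-s)$-th column, contributing $\alpha(s)$ further positions; the map $P$ is then the restriction to $S$. Since $\dim_{\F}P(\U(n,\F))=|S|$ is exactly the right-hand side of the bound, it remains only to prove that $P|_{\C}$ is injective.

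Injectivity follows as soon as I verify that every $M\in\ker P$ satisfies $\wf(M)\le\delta-1$, because then such an $M$ cannot be a nonzero codeword of $\C$. So fix such an $M$: its last $s$ columns vanish, and the first $\alpha(s)$ entries of its $(n-s)$-th column vanish. A direct inspection of the submatrices $\MM{M}{i}$ yields: (a) $\MM{M}{i}=0$ whenever $i>n-s$, since every column of $\MM{M}{i}$ then lies in the zero block; (b) for $\alpha(s)<i\le n-s$, the nonzero part of $\MM{M}{i}$ is confined to its first $n-s-i+1$ columns, so $\rk(\MM{M}{i})\le\min(i,n-s-i+1)$; and (c) for $1\le i\le\alpha(s)$, the column indexed $n-s$ of $\MM{M}{i}$ is also zero, so its nonzero part is confined to $n-s-i$ columns and $\rk(\MM{M}{i})\le\min(i,n-s-i)$. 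Summing these three contributions and using the elementary identity $\min(i,n-s-i+1)-\min(i,n-s-i)=\mathbf{1}\{i\ge\lceil(n-s+1)/2\rceil\}$, I obtain
$$\wf(M)\le \w_{\max}(n-s)-\bigl(\alpha(s)-\lceil(n-s+1)/2\rceil+1\bigr),$$
where $\w_{\max}(n-s)=\sum_{i=1}^{n-s}\min(i,n-s-i+1)$.

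The proof closes once the two numerical ingredients on the right-hand side are matched to the data of the hypothesis. By the definition of $s$ and $j$, the decomposition of $\w_{\max}(n)-\delta$ into the prescribed summands $k,k,k-1,k-1,\dots$ (respectively $k,k-1,k-1,\dots$ in the odd case) exactly tracks the successive differences $\w_{\max}(n)-\w_{\max}(n-s)$, so $\w_{\max}(n-s)=\delta+j$. A short case distinction on the parity of $n-s$, using the definition of $\alpha(s)$, shows $\alpha(s)-\lceil(n-s+1)/2\rceil+1=j+1$. Together these give $\wf(M)\le(\delta+j)-(j+1)=\delta-1$, and the bound follows. The only delicate step, and in my view the main obstacle, is precisely this combinatorial bookkeeping linking the two parity regimes of $n$ and $n-s$ with the recursive definition of $s$, $\ell$, $j$, and $\alpha(s)$; once these identities are unwrapped, the projection argument is straightforward.
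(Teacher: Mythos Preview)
Your proposal is correct and follows essentially the same route as the paper: both project onto the last $s$ columns together with the top $\alpha(s)$ entries of column $n-s$, and both show that any matrix vanishing on these coordinates has flag-rank weight at most $\delta-1$. The paper phrases this as a contradiction and passes to the $(n-s)\times(n-s)$ truncation $\overline{M}$ before estimating $\rk(\overline{M}_{[\bar k+b]})$ case by case, whereas you keep the full matrix and package the same rank estimates via the indicator identity $\min(i,n-s-i+1)-\min(i,n-s-i)=\mathbf{1}\{i\ge\lceil(n-s+1)/2\rceil\}$; the resulting arithmetic $\w_{\max}(n-s)=\delta+j$ and $\alpha(s)-\lceil(n-s+1)/2\rceil+1=j+1$ is exactly the content of the paper's parity computation.
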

\begin{proof}
Towards a contradiction, suppose that
\[ \dim_{\F}(\mathcal{C})\geq n+(n-1)+\ldots+(n-s+1)+\alpha(s)+1, \]
and consider the projection of $\mathcal{C}$ onto the set of $n+(n-1)+\ldots+(n-s+1)+\alpha(s)$ positions 
$$\mathcal{X}=\{(i,n-s+a) \st a \in [s], \; i \in [n-s+a]\}\cup\{(i,n-s) \st i\in[\alpha(s)]\}.$$
By the assumption on $\dim_{\F}(\C)$, such a projection cannot be injective, and hence there exists a non-zero matrix $M\in\C$ with all zeros in the entries indexed by $\mathcal X$. 
%Note that the last $s$ columns of $M$ are zero. 
Let $\overline{M}$ be the matrix in $\U(n-s,\mathbb{F})$ obtained from $M$ by deleting the last $s$ rows and the last $s$ columns. Since the last $s$ columns of $M$ are zero, then $\mathrm{w}_{\mathrm{fr}}(M)=\mathrm{w}_{\mathrm{fr}}(\overline{M})$. Moreover, the first $\alpha(s)$ entries of the last column of $\overline{M}$ are zero.
Observe also that $j$ can be written as follows
\[ j=\begin{cases}
\frac{n-s}2\left(\frac{n-s}2+1 \right)-\delta, & \text{if $s$ is even},\\
\left(\frac{n-s+1}2 \right)^2-\delta, & \text{if $s$ is odd}.\\
\end{cases}
\]
Now, suppose that $n-s=2 \overline{k}$.
Since the last column of the matrices $\overline{M}_{[\overline{k}+1]},\ldots,\overline{M}_{[\overline{k}+j+1]}$ is the zero column and their number of columns is smaller than their number of rows, it follows that $\rk\left(M_{[\overline{k}+b]}\right)\leq \overline{k}-b$ for all $b \in [j+1]$ (one less than their number of columns).
This implies that
\begin{align*}\mathrm{w}_{\mathrm{fr}}(M)&=\sum_{b=1}^{2\overline{k}}\rk\left(M_{[b]}\right)\leq \sum_{b=1}^{\overline{k}}\rk\left(M_{[b]}\right)+\sum_{b=1}^{j+1}\rk\left(M_{[\overline{k}+b]}\right)+\sum_{b=j+2}^{\overline{k}}\rk\left(M_{[\overline{k}+b]}\right)\\&\leq  \sum_{b=1}^{\overline{k}}b+\sum_{b=1}^{j+1}(k-b)+\sum_{b=j+2}^{\overline{k}}(\overline{k}-b+1)=\overline{k}(\overline{k}+1)-j-1=\delta-1,
\end{align*}
yielding a contradiction to the fact that $\mathcal{C}$ has minimum flag-rank distance $\delta$. 
Similar argument can be performed when $n-s$ is odd.
\end{proof}

When $\delta=\w_{\max}(n)$, Proposition \ref{pr:Singleton} gives the bounds shown in \cite[Proposition 1]{fourier2021degenerate} when $n$ is odd and \cite[Proposition 2]{fourier2021degenerate} when $n$ is even.

We can actually give a second version of Proposition \ref{pr:Singleton}, which involves the minimum codimension of a flag-rank-metric code for the given parameters $n$ and $\delta$. The advantage is that this quantity turns out to depend only on $\delta$ and not on $n$ and it is much easier to compute.

\begin{theorem}[Singleton-like bound]\label{th:Singleton_codimension}
Let $\mathcal{C}\subseteq \U(n,\F)$ be a linear flag-rank-metric code with minimum distance $\delta$. Then 
\begin{equation}\label{eq:sing-bound}
    \mathrm{codim}_{\F}(\C)\geq \delta-1+\lfloor{\sqrt{\delta-1}}\rfloor \cdot \left\lfloor{\frac{\sqrt{4(\delta-1)+1}-1}{2}}\right\rfloor.
\end{equation} 
\end{theorem}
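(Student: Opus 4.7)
The plan is to derive the codimension bound \eqref{eq:sing-bound} directly from Proposition \ref{pr:Singleton} by eliminating the dependence on $n$ and $s$ in favour of $\delta$ alone. Since $\dim_{\F}(\U(n,\F)) = \binom{n+1}{2}$, Proposition \ref{pr:Singleton} is equivalent to
\[ \mathrm{codim}_{\F}(\C) \;\geq\; \binom{n+1}{2} - sn + \binom{s}{2} - \alpha(s) \;=\; \binom{n-s+1}{2} - \alpha(s), \]
where the identity $\binom{n+1}{2} - sn + \binom{s}{2} = \binom{n-s+1}{2}$ is an elementary manipulation.

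Next, I would split into two cases according to the parity of $n-s$, setting $\overline{k} = \lceil (n-s)/2 \rceil$ so that $n-s = 2\overline{k}$ in Case A and $n-s = 2\overline{k}-1$ in Case B. Using the explicit values $\alpha(s) = \overline{k}+j+1$ and $\delta = \overline{k}(\overline{k}+1) - j$ extracted from within the proof of Proposition \ref{pr:Singleton} (Case A), one obtains
\[ \mathrm{codim}_{\F}(\C) \;\geq\; \binom{2\overline{k}+1}{2} - \overline{k} - j - 1 \;=\; 2\overline{k}^2 - j - 1 \;=\; \delta - 1 + \overline{k}(\overline{k}-1), \]
while the analogous identities $\alpha(s) = \overline{k}+j$ and $\delta = \overline{k}^2 - j$ in Case B (with $0 \leq j < \overline{k}$) yield
\[ \mathrm{codim}_{\F}(\C) \;\geq\; \binom{2\overline{k}}{2} - \overline{k} - j \;=\; 2\overline{k}^2 - 2\overline{k} - j \;=\; \delta - 1 + (\overline{k}-1)^2. \]

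The last step is to interpret $a := \lfloor \sqrt{\delta-1} \rfloor$ and $b := \lfloor (\sqrt{4(\delta-1)+1} - 1)/2 \rfloor$ as the largest integers with $a^2 \leq \delta - 1$ and $b(b+1) \leq \delta - 1$, respectively. In Case A, the admissible range $\delta \in [\overline{k}^2, \overline{k}(\overline{k}+1)]$ forces $a = \overline{k}$ and $b = \overline{k}-1$, except at the endpoint $\delta = \overline{k}^2$ (possible only when $j = \overline{k}$), where $a = b = \overline{k}-1$; there the bound $\overline{k}(\overline{k}-1)$ already dominates $(\overline{k}-1)^2$ for $\overline{k} \geq 1$. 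In Case B, the range $\delta \in [\overline{k}(\overline{k}-1)+1, \overline{k}^2]$ forces $a = b = \overline{k}-1$, so the inequality is tight. In both cases $\mathrm{codim}_{\F}(\C) \geq \delta - 1 + ab$, establishing \eqref{eq:sing-bound}. The only real obstacle is the bookkeeping across the parity cases: verifying that the floor expressions and the parameters of Proposition \ref{pr:Singleton} collapse to the same quantity independently of $n$, with the boundary case $\delta = \overline{k}^2$ requiring a separate sanity check.
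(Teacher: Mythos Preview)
Your proposal is correct and follows essentially the same route as the paper: both arguments start from Proposition~\ref{pr:Singleton}, rewrite the dimension bound as a codimension bound $\binom{n-s+1}{2}-\alpha(s)$, and then identify this quantity with $\delta-1+ab$ via the explicit relation between $\delta$, $j$, and $\overline{k}=\lceil (n-s)/2\rceil$. The paper splits into four cases according to the parities of $n$ and $s$ (writing out only one of them), whereas you observe that only the parity of $n-s$ matters for $\alpha(s)$ and handle both resulting cases; your extra care at the endpoint $\delta=\overline{k}^2$ is also sound (and in fact necessary, since for $n$ odd and $s$ odd that value of $j$ is admissible and the paper's claimed equality \eqref{eq:toprovebound} becomes a strict inequality there).
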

\begin{proof}
Let $k,s,\ell,j$ and $\alpha(s)$ be as in Proposition \ref{pr:Singleton}.
By Proposition \ref{pr:Singleton}, we have that 
$$ \mathrm{codim}_{\F}(\C)\geq \frac{n(n+1)}2-(n+\ldots+(n-s+1)+\alpha(s))=\frac{(n-s)(n-s+1)}2-\alpha(s).$$
In order to prove the statement it is sufficient to verify that 
\begin{equation}\label{eq:toprovebound} \frac{(n-s)(n-s+1)}2-\alpha(s)=\delta-1+\lfloor{\sqrt{\delta-1}}\rfloor \cdot \left\lfloor{\frac{\sqrt{4(\delta-1)+1}-1}{2}}\right\rfloor. \end{equation}
We will only prove the above equality  when $n$ and $s$ are both even since in the remaining cases the computations are similar.
Note that, in this case $0\leq j < k-\ell$ and 
\[ \frac{(n-s)(n-s+1)}2-\alpha(s)= \frac{(n-s)^2}2-j-1=2(k-\ell)^2-j-1. \]
Moreover,
\[\delta=k(k+1)-2(k+(k-1)+\ldots+(k-\ell+1))-j=(k-\ell)(k-\ell+1)-j,\]
from which we get
\[ \lfloor{\sqrt{\delta-1}}\rfloor=k-\ell \]
and
\[ \left\lfloor{\frac{\sqrt{4(\delta-1)+1}-1}{2}}\right\rfloor=k-\ell-1, \]
by making use of the restrictions on $j$.
Therefore, \eqref{eq:toprovebound} is verified. 
\end{proof}

From now on, we will define the two following functions coming from Theorem \ref{th:Singleton_codimension}, which give a more compact notation for the Singleton-like bound.
\begin{align}
    g(\delta)&:=\delta-1+\lfloor{\sqrt{\delta-1}}\rfloor \cdot \left\lfloor{\frac{\sqrt{4(\delta-1)+1}-1}{2}}\right\rfloor \\
    f(n,\delta)&:=\frac{n(n+1)}{2}-g(\delta).
\end{align}

%%%%%%%%%%%%%%%%%%%%%%%%%%%%%%%%%%%%%%%%%%%%%%%%%%%%%%%%%%%%%%%%%%%%%%%%%%%%%%%%%%%%%%%%%%%%%%%%%%%%

\section{Maximum flag-rank distance codes}\label{sec:MFRC}

In this section we focus on the study and constructions of extremal codes with respect to the bound of Theorem \ref{th:Singleton_codimension}. 

\begin{definition}\label{def:MFRD}
A flag-rank-metric code is said to be a \textbf{maximum flag-rank distance (MFRD) code} if its parameters meet the bound of Theorem \ref{th:Singleton_codimension} with equality. In other words, a flag-rank-metric code is MFRD if it is an $\ntt{n,f(n,\delta),\delta}$ code, for some $\delta \geq 1$.
\end{definition}
In the rest of this paper we provide a number of different constructions of $\ntt{n,f(n,\delta),\delta}$ MFRD codes, for some values of the minimum flag-rank distance $\delta$.

%%%%%%%%%%%%%%%%%%%%%%%%%%%%%%%%%%%%%%%%%%%%%%%%%%%%%%%%%%%%%%%%%%%%%%%%%%%%%%%%%%%%%%%%%%%%%%%%%%%%

\subsection{Optimum-distance MFRD codes}
In this subsection we focus on optimum-distance MFRD codes, i.e. MFRD codes with $\delta=\w_{\max}(n)$; see Remark \ref{rem:wmax}. 
We present a general construction that works in both the cases $n$ odd and $n$ even. When $n$ is odd, the same construction has been provided in \cite{fourier2021degenerate}, but we repeat it for the the sake of completeness. We recall that $k=\lceil \frac{n}{2}\rceil$, i.e. we have $n=2k$ when $n$ is even and $n=2k-1$ when $n$ is odd.

The following two auxiliary results are straightforward and hence we omit their proofs. 

\begin{lemma}\label{lem:rank-1}
 For every $M\in \U(n,\F)$ and every $i \in [n-1]$ we have
 $$ |\rk(\MM{M}{i+1})-\rk(\MM{M}{i})| \leq 1.$$
\end{lemma}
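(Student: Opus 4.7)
The plan is to observe that the two submatrices $\MM{M}{i}$ and $\MM{M}{i+1}$ share a large common submatrix, and then deduce the claim from the elementary fact that deleting a single row (or column) from a matrix changes its rank by at most $1$.

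More precisely, $\MM{M}{i}$ is the $i \times (n-i+1)$ block of $M$ consisting of rows $1,\ldots,i$ and columns $i,\ldots,n$, while $\MM{M}{i+1}$ is the $(i+1)\times(n-i)$ block consisting of rows $1,\ldots,i+1$ and columns $i+1,\ldots,n$. Let $N$ denote the $i \times (n-i)$ submatrix of $M$ with rows $1,\ldots,i$ and columns $i+1,\ldots,n$. Then $N$ is obtained from $\MM{M}{i}$ by removing its first column, and also from $\MM{M}{i+1}$ by removing its last row.

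Using the standard bounds
\[
\rk(\MM{M}{i})-1 \leq \rk(N) \leq \rk(\MM{M}{i}), \qquad \rk(\MM{M}{i+1})-1 \leq \rk(N) \leq \rk(\MM{M}{i+1}),
\]
I would combine the first upper bound with the second lower bound to get $\rk(\MM{M}{i+1}) \leq \rk(\MM{M}{i})+1$, and the second upper bound with the first lower bound to get $\rk(\MM{M}{i}) \leq \rk(\MM{M}{i+1})+1$. Together these give the desired inequality $|\rk(\MM{M}{i+1})-\rk(\MM{M}{i})|\leq 1$.

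There is no real obstacle here; the only thing to keep in mind is the correct bookkeeping of row/column indices to confirm that $N$ really is the common submatrix as claimed. Note also that the upper-triangularity hypothesis on $M$ plays no role in this argument, so the lemma in fact holds for arbitrary $M\in \Mat(n,\F)$.
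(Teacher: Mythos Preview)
Your argument is correct; the paper itself omits the proof of this lemma, calling it ``straightforward'', and your common-submatrix argument is exactly the kind of direct verification the authors presumably had in mind. Your observation that upper-triangularity is irrelevant here is also accurate.
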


As an immediate consequence of Lemma \ref{lem:rank-1} we get the next proposition.

\begin{proposition}\label{prop:OMFRDcriterion}
Let $\C\subseteq\U(n,\F)$. The following hold.
\begin{enumerate}
    \item If $n=2k-1$ then $\df(\C)=k^2$ if and only if $\dr(\MM{\C}{k})=k$.
    \item If $n=2k$, then $\df(\C)=k(k+1)$ if and only if $\dr(\MM{\C}{k})=\dr(\MM{\C}{k+1})=k$.
\end{enumerate}
\end{proposition}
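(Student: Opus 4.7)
The plan is to reduce the statement to a pointwise condition on single matrices and then invoke Lemma \ref{lem:rank-1}. Since $\w_{\max}(n)$, equal to $k^2$ when $n=2k-1$ and to $k(k+1)$ when $n=2k$ (Remark \ref{rem:wmax}), is by definition the largest value of $\wf$ on $\U(n,\F)$, the equality $\df(\C)=\w_{\max}(n)$ is equivalent to demanding $\wf(M)=\w_{\max}(n)$ for every nonzero $M\in\C$. Thus it suffices to characterize the individual matrices of maximal flag-rank weight and then apply that characterization uniformly over $\C$.

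For a fixed $M\in\U(n,\F)$, the block $M_{[i]}$ has size $i\times(n-i+1)$, hence $\rk(M_{[i]})\leq\min(i,n-i+1)$, and the pointwise upper bounds sum to exactly $\w_{\max}(n)$. Consequently $\wf(M)=\w_{\max}(n)$ forces $\rk(M_{[i]})=\min(i,n-i+1)$ at every $i$, and in particular at the peak indices: $\rk(M_{[k]})=k$ when $n=2k-1$, and $\rk(M_{[k]})=\rk(M_{[k+1]})=k$ when $n=2k$. This gives the forward implication in both (1) and (2).

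The converse pointwise direction is precisely where Lemma \ref{lem:rank-1} enters. That lemma says that consecutive terms in the sequence $i\mapsto\rk(M_{[i]})$ differ by at most one. Anchoring this sequence at the peak value $k$ and propagating outward forces $\rk(M_{[i]})\geq k-|k-i|$ when $n=2k-1$, and analogously $\rk(M_{[i]})\geq k-\max(0,k-i,i-k-1)$ when $n=2k$. These lower bounds coincide term-by-term with the trivial upper bound $\min(i,n-i+1)$, so equality must hold at every $i$, and summation yields $\wf(M)=\w_{\max}(n)$.

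Finally, I would promote the pointwise equivalence to the code level by applying it to every nonzero $M\in\C$: the requirement that $\rk(M_{[k]})=k$ for every such $M$ (and additionally $\rk(M_{[k+1]})=k$ in the even case) is exactly the statement that $\MM{\C}{k}$ (respectively $\MM{\C}{k+1}$) has minimum rank distance $k$, since $k$ is the maximum possible rank of a matrix in $\Mat(k,n-k+1,\F)$ (respectively $\Mat(k+1,n-k,\F)$). No genuine obstacle arises: once Lemma \ref{lem:rank-1} is in hand, the rest is bookkeeping with Remark \ref{rem:wmax} and the block dimensions, which is precisely why the authors describe the proposition as an immediate consequence of the lemma.
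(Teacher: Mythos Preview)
Your pointwise argument is correct and matches what the paper intends: the proposition is declared an immediate consequence of Lemma~\ref{lem:rank-1}, and your use of that lemma to propagate the rank constraint outward from the peak index (or indices) is exactly the mechanism the authors have in mind.

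There is, however, a genuine gap in your final paragraph. You assert that the condition ``$\rk(M_{[k]})=k$ for every nonzero $M\in\C$'' is \emph{exactly} the statement $\dr(\MM{\C}{k})=k$, but only one direction of this equivalence holds. The hypothesis $\dr(\MM{\C}{k})=k$ says that every nonzero element of the \emph{image} $\MM{\C}{k}$ has rank $k$; it is silent about nonzero $M\in\C$ for which $M_{[k]}=0$. For a concrete failure, take $n=3$ and let $\C\subseteq\U(3,\F)$ be spanned by the matrix with a single $1$ in entry $(1,1)$ together with the matrix having $1$'s in entries $(1,2)$ and $(2,3)$. Then $\MM{\C}{2}=\F\cdot\mathrm{Id}_2$, so $\dr(\MM{\C}{2})=2=k$, yet the first generator has flag-rank weight $1$ and hence $\df(\C)=1\neq 4=k^2$. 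Thus the converse implications in (1) and (2) actually require the additional (tacit) hypothesis that the projection $M\mapsto M_{[k]}$ be injective on $\C$. In the paper's uses of the proposition (Theorem~\ref{th:extopt} and Remark~\ref{rem:nonexistence}) this injectivity is automatic for the codes under consideration, which is presumably why the issue is not flagged, but your write-up should not claim the naked equivalence.
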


In case of optimum distance flag-rank-metric codes the bound of Theorem \ref{th:Singleton_codimension} reads as 
\begin{align*}
    f(2k-1, k^2) &= k, \\
    f(2k, k(k+1)) &= k+1,
\end{align*}

We point out that Fourier and Nebe  in \cite{fourier2021degenerate}  gave a construction of optimum-distance MFRD codes only for $n=2k-1$. Moreover, in the same paper they also dealt with  optimum-distance flag-rank-metric codes with $n=2k$. However, in this case, 
they only construct a $\ntt{2k,k,k(k+1)}$ code, that is one dimension less than an MFRD code.
Here, we provide a construction of  optimum-distance MFRD codes which coincides with the one  in \cite[Proposition 1]{fourier2021degenerate} for $n$ odd and improves the one in \cite[Proposition 2]{fourier2021degenerate} when $n$ is even. Before that, we introduce the following notation, which we illustrate in Example \ref{ex:notation4.4}.

\begin{notation}
    We denote by $\pi$ the projection map from $\Mat(n,\F)$ onto $\U(n,\F)$, that is 
$$\begin{array}{rccl}\pi:&\Mat(n,\F) & \longrightarrow & \U(n,\F) \\
& (a_{ij}) & \longmapsto & (b_{ij})\end{array}$$
where 
$$ b_{ij}=\begin{cases} a_{ij} & \mbox{ if } i \le j, \\
0 & \mbox{ otherwise. }\end{cases}$$
Moreover, let $n,\ell, i,j$ be positive integers such that $i+\ell-1,j+\ell-1 \leq n$. We define the  map 
    $$\phi_{i,j}\colon \Mat(\ell,\F)\rightarrow \Mat(n,\F),$$
    given by $(\phi_{i,j}(M))_I^J=M$, where $I=\{i,i+1,\ldots,i+\ell-1\}$ and $J=\{j,j+1,\ldots,j+\ell-1\}$, and with all the remaining entries equal to zero.  
\end{notation}

\begin{example}\label{ex:notation4.4}
Let $\ell=3$ and $n=5$. The map $\phi_{1,2}:\Mat(3,\mathbb{Q})\longrightarrow \Mat(5,\mathbb{Q})$ acts as
\bigskip 

\[
  M = \left(\begin{array}{*5{c}}
    \tikzmark{left}{5} & 2 & 1 \\
    6 & 7 & 8  \\
    1 & 12 & \tikzmark{right}{5}  
  \end{array}\right)
  \Highlight[first]
  \qquad
  \phi_{1,2}(M) = \left(\begin{array}{*5{c}}
    0 & \tikzmark{left}{5} & 2 & 1 & 0 \\
    0 & 6 & 7 & 8 & 0 \\
   0 & 1 & 12 & \tikzmark{right}{5} & 0 \\
    0 & 0 & 0 & 0 & 0 \\
    0 & 0 & 0 & 0 & 0
  \end{array}\right).
\]
\Highlight[second]
\tikz[overlay,remember picture] {
  \draw[->,thick,red,dashed] (first) -- (second) node [pos=0.66,above] {$\phi_{1,2}$};
  %\node[above of=first] {$N$};
  \node[above of=second] {$M$};
}
\end{example}

\begin{construction}\label{constr:A}
 Let $\epsilon\in\{0,1\}$ and $k \in \mathbb{N}$.
 Let $\D$ be a $[(k+\epsilon)\times (k+\epsilon),(k+\epsilon),(k+\epsilon)]_{\F}$ MRD code. Define the $\ntt{2k-1+\epsilon,k+\epsilon}$ code
 $$\C_{\D}:=\pi\circ\phi_{1,k}(\D).$$
\end{construction}
Figures \ref{picA} and \ref{picAeven} show how the codewords of $\C_{\D}$ look like in case $n$ is odd and $n$ is even, respectively.

\begin{figure}[ht!]
\centering
\begin{tikzpicture}[thick, scale=0.6]
\draw[help lines, very thick] (0,0) -- (1,0)-- (1,-1) -- (2,-1) -- (2,-2)-- (3,-2) -- (3,-3)-- (4,-3) -- (4,-4)--(5,-4)--(5,1) -- (0,1)--(0,0);
\draw[red, very thick] (2,1)-|(2,-2)-|(5,-2)-| (5,1) -|cycle;
\node[red] at (3.5,-0.5) {$\D$};
\end{tikzpicture}
\caption{Representation of the codewords from Construction \ref{constr:A} when $n$ is odd. In the red box we embed the codewords from $\D$, while outside we only have $0$ entries.}
\label{picA}
\end{figure}
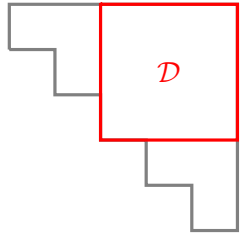

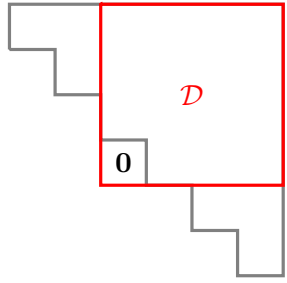
\begin{figure}[ht!]
\centering
\begin{tikzpicture}[thick, scale=0.6]
\draw[help lines, very thick] (0,0) -- (1,0)-- (1,-1) -- (2,-1) -- (2,-2)-- (3,-2) -- (3,-3)-- (4,-3) -- (4,-4)--(5,-4)--(5,-5) --(6,-5) -- (6,1)-- (0,1)--(0,0);
\draw[red, very thick] (2,1)-|(2,-3)-|(6,-3)-| (6,1) -|cycle;
\node[red] at (4,-1) {$\D$};
\node at (2.5,-2.5) {$\boldsymbol{0}$};
\end{tikzpicture}
\caption{Representation of the codewords from Construction \ref{constr:A} when $n$ is even. In the red box there are the projections of the codewords from $\D$, while outside we only have $0$ entries.}
\label{picAeven}
\end{figure}

\begin{theorem}\label{th:extopt}
 For $\epsilon \in \{0,1\}$, $k\in\mathbb N$ and every $[(k+\epsilon)\times (k+\epsilon),(k+\epsilon),(k+\epsilon)]_{\F}$ MRD code $\D$, the code $\C_\D:=\pi\circ\phi_{1,k}(\D)$ defined in Construction \ref{constr:A} is a $\ntt{2k-1+\epsilon,k+\epsilon, k(k+\epsilon)}$ optimum-distance MFRD code.
\end{theorem}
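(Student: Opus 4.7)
The plan is to verify three things: that $\dim_{\F}(\C_\D)=k+\epsilon$, that $\df(\C_\D)\geq k(k+\epsilon)=\w_{\max}(2k-1+\epsilon)$, and that the dimension matches $f(2k-1+\epsilon,\,k(k+\epsilon))$. The third item is immediate from the closed form of $f$ given in the previous section: $f(2k-1,k^2)=k$ and $f(2k,k(k+1))=k+1$.

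First I would analyze the composition $\pi\circ \phi_{1,k}$ carefully. The map $\phi_{1,k}$ places a $(k+\epsilon)\times (k+\epsilon)$ matrix $M$ in rows $1,\dots,k+\epsilon$ and columns $k,\dots,2k-1+\epsilon$. A position $(i,k+j-1)$ of the embedded block lies on or above the diagonal precisely when $j\geq i-k+1$. When $\epsilon=0$ all embedded positions satisfy $i\leq k\leq k+j-1$, so $\pi$ acts as the identity on the image of $\phi_{1,k}$, and the composition is obviously injective. When $\epsilon=1$ the only embedded position that is strictly below the diagonal is $(k+1,k)$, corresponding to the entry $M_{k+1,1}$. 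Hence $\pi\circ\phi_{1,k}(M)=0$ forces $M$ to have all entries zero except possibly $M_{k+1,1}$, i.e.\ $\rk(M)\leq 1$; since $\D$ is MRD with $\dr(\D)=k+1\geq 2$, this forces $M=0$. Therefore $\pi\circ\phi_{1,k}$ is injective on $\D$ and $\dim_{\F}(\C_\D)=k+\epsilon$.

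Next I would check the minimum flag-rank distance by invoking Proposition \ref{prop:OMFRDcriterion}. Let $C=\pi\circ\phi_{1,k}(M)$ with $M\in\D\setminus\{0\}$, so $\rk(M)=k+\epsilon$. Unwinding the definition of $\MM{C}{i}$, I would observe:
\begin{itemize}
\item If $\epsilon=0$: $\MM{C}{k}$ consists of rows $1,\dots,k$ and columns $k,\dots,2k-1$ of $C$, which is exactly $M$. Hence $\MM{\C_\D}{k}=\D$ and $\dr(\MM{\C_\D}{k})=k$.
\item If $\epsilon=1$: $\MM{C}{k}$ equals the first $k$ rows of $M$ (the single zeroed entry $(k+1,k)$ lies outside this block), while $\MM{C}{k+1}$ equals the last $k$ columns of $M$ (again the zeroed entry lies in column $k$, outside this block). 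Each of these is obtained from a matrix of rank $k+1$ by deleting exactly one row or one column, so each has rank at least $k$; being of size $k\times(k+1)$ or $(k+1)\times k$, the rank is exactly $k$.
\end{itemize}
Proposition \ref{prop:OMFRDcriterion} then gives $\df(\C_\D)=k(k+\epsilon)$.

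Finally, combining $\dim_{\F}(\C_\D)=k+\epsilon$ with $\delta=k(k+\epsilon)$ and the already noted values of $f$, we conclude that $\C_\D$ is an $\ntt{2k-1+\epsilon,\,k+\epsilon,\,k(k+\epsilon)}$ code meeting the Singleton-like bound of Theorem \ref{th:Singleton_codimension} with equality, hence an optimum-distance MFRD code. The only step requiring real care is the injectivity argument in the even case $\epsilon=1$, where one must genuinely use that $\D$ has minimum rank distance at least $2$; everything else is direct bookkeeping of indices plus the rank inequality for row/column deletion.
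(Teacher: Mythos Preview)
Your proposal is correct and follows essentially the same approach as the paper's proof: both reduce the statement to Proposition~\ref{prop:OMFRDcriterion} and then identify $\MM{(\C_\D)}{k}$ and $\MM{(\C_\D)}{k+1}$ (for $\epsilon=1$) as, respectively, the row- and column-deletions of $M\in\D$, so that the MRD property of $\D$ forces rank exactly $k$. The only minor difference is that the paper phrases the row/column deletion as a puncturing of the MRD code $\D$ and quotes the fact that such a puncturing is again MRD (which also implicitly gives the dimension), whereas you argue injectivity of $\pi\circ\phi_{1,k}$ on $\D$ directly and then use the elementary rank inequality under deletion of a single row or column; these are the same argument in slightly different language.
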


\begin{proof}
 The case $n=2k-1$ is shown in \cite[Proposition 1]{fourier2021degenerate}. Hence, we only prove the claim when $n=2k$. By Proposition \ref{prop:OMFRDcriterion}(2), we have to show that $\dr(\MM{(\C_\D)}{k})=\dr(\MM{(\C_\D)}{k+1})=k$. 
 It is immediate to verify that, for $n=2k$, we have 
 \begin{align*} \MM{(\pi\circ\phi_{1,k}(\D))}{k}&=\Pi^r(\D,\mathrm{Id}_{k+1},[k]) \\
 \MM{(\pi\circ\phi_{1,k}(\D))}{k+1}&=\Pi^c(\D,\mathrm{Id}_{k+1},[k]).
 \end{align*}
 Since the puncturing of a $[(k+1)\times (k+1),(k+1),(k+1)]_{\F}$ MRD code on the first $k$ rows (respectively columns) is a $[k \times (k+1),(k+1),k]_{\F}$ MRD code (respectively a  $[(k+1) \times k,(k+1),k]_{\F}$ MRD code), this concludes the proof.
\end{proof}

\begin{remark}
When $\mathbb{F}$ is finite, the MRD codes involved in Construction \ref{constr:A} correspond to \emph{semifields}. A \textbf{finite semifield} $(\mathbb{S},+,\circ)$ is a finite non-associative division ring with multiplicative identity element. If $\mathbb{S}$ has finite dimension $n$ over a subfield $\mathbb{K}$, then consider the set $\mathcal{R}$ of $\mathbb{K}$-linear maps defined by the right multiplication of the elements in $\mathbb{S}$ (also known as the \textbf{spread set} of $\mathbb{S}$), that is the elements of $\mathcal{R}$ are the $\mathbb{K}$-linear maps
$$\begin{array}{rccl}M_a:&\mathbb{S} & \longrightarrow &\mathbb{S} \\
& b & \longmapsto & a \circ b,\end{array}$$
% \[ M_a \colon b \in \mathbb{S} \mapsto a \circ b\in \mathbb{S}, \]
for every $a \in \mathbb{S}$. Since $\dim_{\mathbb{K}}(\mathbb{S})=n$ and $\mathcal{R}$ results to be a $\mathbb{K}$-vector space, then $\mathcal{R}$ can be seen as an $[n\times n,n,n]_{\mathbb{K}}$ code; see e.g. \cite{lavrauw2011finite,sheekey2020new}.
It is well-known that there is a one-to-one correspondence between isotopy classes of finite semifields of dimension $n$ over $\mathbb{K}$ with equivalence classes of $[n\times n, n,n]_{\mathbb{K}}$ MRD codes; see \cite{de2016algebraic}.
\end{remark}

The next example illustrates Construction \ref{constr:A} when $n$ is even.

\begin{example}\label{ex:ConstA}
Let $n=4$. Define $\D\subseteq \Mat(3,\F_2)$ to be the $[3\times 3,3,3]_{\F_2}$ MRD code generated by the following matrices
$$ D_1=\begin{pmatrix}
1 & 0 & 0 \\
0 & 1 & 0 \\
0 & 0 & 1
\end{pmatrix}, \quad D_2=\begin{pmatrix}
0 & 0 & 1 \\
1 & 0 & 1 \\
0 & 1 & 0
\end{pmatrix}, \quad D_3=\begin{pmatrix}
0 & 1 & 0 \\
0 & 1 & 1 \\
1 & 0 & 1
\end{pmatrix}.$$
Now, we have that $\phi_{1,2}(\D)$ is the subspace of $\Mat(4,\F_2)$ generated by the matrices
$$ \phi_{1,2}(D_1)=\begin{pmatrix}
0 & 1 & 0 & 0 \\
0 & 0 & 1 & 0 \\
0 & 0 & 0 & 1 \\
0 & 0 & 0 & 0
\end{pmatrix}, \quad \phi_{1,2}(D_2)=\begin{pmatrix}
0 & 0 & 0 & 1 \\
0 & 1 & 0 & 1 \\
0 & 0 & 1 & 0 \\
0 & 0 & 0 & 0
\end{pmatrix}, \quad \phi_{1,2}(D_3)=\begin{pmatrix}
0 & 0 & 1 & 0 \\
0 & 0 & 1 & 1 \\
0 & 1 & 0 & 1 \\
0 & 0 & 0 & 0
\end{pmatrix}.$$
By applying $\pi$ to $\phi_{1,2}(\D)$, we get that all the entries below the main diagonal are zeros, hence, the code $\C_\D=\pi\circ\phi_{1,2}(\D)\subseteq \U(4,\F_2)$ is the $\{4,3,6\}_{\F_2}$ MFRD code generated by 
$$ M_1=\begin{pmatrix}
0 & 1 & 0 & 0 \\
0 & 0 & 1 & 0 \\
0 & 0 & 0 & 1 \\
0 & 0 & 0 & 0
\end{pmatrix}, \quad M_2=\begin{pmatrix}
0 & 0 & 0 & 1 \\
0 & 1 & 0 & 1 \\
0 & 0 & 1 & 0 \\
0 & 0 & 0 & 0
\end{pmatrix}, \quad  M_3=\begin{pmatrix}
0 & 0 & 1 & 0 \\
0 & 0 & 1 & 1 \\
0 & 0 & 0 & 1 \\
0 & 0 & 0 & 0
\end{pmatrix}.$$
\end{example}

\begin{remark}[Nonexistence of optimum distance MFRD codes]\label{rem:nonexistence}
Observe that when $n=2k-1$, Construction \ref{constr:A} is essentially the only possible -- up to arbitrarily changing everything that is not in the $k\times k$ top-rightmost block. This is due to Proposition \ref{prop:OMFRDcriterion}.

Furthermore, the same result  implies that  when $n=2k-1$, an optimum distance MFRD code over $\F$ exists if and only there exists a $[k\times k,k,k]_{\F}$ MRD code. As already pointed out, it is well-known that MRD codes with these parameters exist over any finite field, but they do not exist over algebraically closed fields, unless $k=1$. 

For the case of optimum distance MFRD codes for $n=2k$, Proposition \ref{prop:OMFRDcriterion} still gives  a necessary (but maybe not sufficient) condition for their existence, that is the existence of a $[k\times(k+1),k+1,k]_{\F}$ MRD code. Again, these codes do not exist over algebraically closed fields, unless $k=1$.
\end{remark}

\subsection{Quasi-optimum-distance MFRD codes}
In this subsection we provide a construction of \textbf{quasi-optimum MFRD} codes, i.e. MFRD with minimum flag-rank distance~$\delta=\w_{\max}(n)-1$. However, our construction is only valid for $n$ odd. Note that this construction is similar to  Construction \ref{constr:A}.

\begin{construction}\label{constr:B}
 Let $k\in\mathbb{N}$ and let $\D$ be a $[(k+1)\times (k+1),(k+1),(k+1)]_{\F}$ MRD code. Define the $\ntt{2k-1,k+1}$ code
 $$\C_{\D}:=\pi\circ\phi_{1,k-1}(\D).$$
\end{construction}

In Figure \ref{picA'} we represent the codewords of a code obtained via Construction \ref{constr:B}.

\begin{figure}[ht!]
\centering
\begin{tikzpicture}[thick, scale=0.6]
\draw[help lines, very thick] (0,0) -- (1,0)-- (1,-1) -- (2,-1) -- (2,-2)-- (3,-2) -- (3,-3)-- (4,-3) -- (4,-4)--(5,-4)--(5,1) -- (0,1)--(0,0);
\draw[red, very thick] (1,1)-|(1,-3)-|(5,-3)-| (5,1) -|cycle;
\node at (2.5,-2.5) {$\boldsymbol{0}$};
\node at (1.5,-1.5) {$\boldsymbol{0}$};
\node at (1.5,-2.5) {$\boldsymbol{0}$};
\node[red] at (3,-0.5) {$\D$};
\end{tikzpicture}
\caption{Representation of the codewords from Construction \ref{constr:B}. In the red box there are the projections of the codewords from $\D$.}
\label{picA'}
\end{figure}

\begin{theorem}\label{th:existnoddmrd}
 For every  $k\in\mathbb N$ and every $[(k+1)\times (k+1),(k+1),(k+1)]_{\F}$ MRD code $\D$, the code $\C_\D:=\pi\circ\phi_{1,k-1}(\D)$ defined in Construction \ref{constr:B} is a $\{2k-1,k+1,k^2-1\}_{\F}$ MFRD code.
\end{theorem}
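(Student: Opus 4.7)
The plan is to verify that $\C_\D$ satisfies $\dim_\F(\C_\D)=k+1$ and $\df(\C_\D)\geq k^2-1$. Combined with the easy arithmetic check $f(2k-1,k^2-1)=k+1$ and Theorem \ref{th:Singleton_codimension}, this will force $\df(\C_\D)=k^2-1$ (since any larger distance would bound the dimension strictly below $k+1$), establishing that $\C_\D$ is a $\{2k-1,k+1,k^2-1\}_\F$ MFRD code.

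To show $\dim_\F(\C_\D)=k+1$, I would describe the kernel of $\pi\circ\phi_{1,k-1}\colon \Mat(k+1,\F)\to \U(2k-1,\F)$ explicitly. Unpacking the map, a matrix $M\in\Mat(k+1,\F)$ lies in this kernel if and only if $M_{a,c}=0$ for every pair $(a,c)$ with $c\geq a-k+2$. The only positions not forced to be zero are $(k,1)$, $(k+1,1)$, and $(k+1,2)$, so any element of the kernel has rank at most $2$. Since every nonzero element of $\D$ has rank $k+1\geq 3$ (the case $k=1$ being trivial), the restriction of $\pi\circ\phi_{1,k-1}$ to $\D$ is injective, and hence $\dim_\F(\C_\D)=k+1$.

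The heart of the proof is the distance bound. Fix a nonzero $D\in\D$ and set $C=\pi\circ\phi_{1,k-1}(D)$; I would identify $C_{[i]}$ as an explicit submatrix of $D$ (possibly padded by zero rows or columns) for each $i\in[2k-1]$. For $i\leq k-1$, $C_{[i]}$ consists of $k-1-i$ leading zero columns followed by the first $i$ rows of $D$, so $\rk(C_{[i]})=i$ by invertibility of $D$. For $i\geq k+1$, $C_{[i]}$ equals the submatrix of $D$ on columns $i-k+2,\ldots,k+1$ padded with $i-k-1$ zero rows; removing at most $k-1$ columns from the invertible $D$ leaves $\rk(C_{[i]})=2k-i$, the maximum possible. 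Finally, for $i=k$, one checks $C_{[k]}=D^{[k]}_{\{2,\ldots,k+1\}}$, a $k\times k$ submatrix of $D$; deleting one row and one column from an invertible $(k+1)\times(k+1)$ matrix leaves rank at least $k-1$. Summing gives
\[ \wf(C)=\sum_{i=1}^{k-1}i+\rk(C_{[k]})+\sum_{i=k+1}^{2k-1}(2k-i)=(k-1)k+\rk(C_{[k]})\geq k^2-1. \]
The principal obstacle lies in the $i=k$ case: the invertibility of $D$ only forces $\rk(C_{[k]})\geq k-1$ rather than $k$, and this is precisely what causes $\df(\C_\D)$ to drop from the optimum value $k^2$ to $k^2-1$.
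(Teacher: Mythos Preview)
Your proof is correct and follows essentially the same approach as the paper: both argue that $\rk(C_{[i]})=\min\{i,2k-i\}$ for $i\neq k$ while $\rk(C_{[k]})\geq k-1$, sum these to obtain $\wf(C)\geq k^2-1$, and invoke the Singleton-like bound to conclude. Your treatment is in fact more explicit than the paper's, since you spell out the injectivity of $\pi\circ\phi_{1,k-1}$ on $\D$ (via the kernel having rank at most~$2$) and the arithmetic $f(2k-1,k^2-1)=k+1$; the only cosmetic slip is that, in the paper's convention $A_I^J$ (rows $I$, columns $J$), your $C_{[k]}$ should read $D_{[k]}^{\{2,\ldots,k+1\}}$ rather than $D^{[k]}_{\{2,\ldots,k+1\}}$.
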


\begin{proof}
Note that $(\pi\circ\phi_{1,k-1}(\D))_{[k]}=\Pi^r(\mathcal{D},\mathrm{Id}_{k+1},[k])$ and since $\mathcal{D}$ is an MRD code with minimum distance $k+1$, then
\[ \mathrm{d_r}((\pi\circ\phi_{1,k-1}(\D))_{[k]})\geq k-1, \]
and 
\[ \mathrm{d_r}((\pi\circ\phi_{1,k-1}(\D))_{[i]})= \min\{ i,n-i+1 \}, \quad \text{ for }\,\, i \ne k. \]
The assertion then follows from Theorem \ref{th:Singleton_codimension}.
\end{proof}

\begin{example}\label{constrB}
    Let $k=3$ and $n=2k-1=5$. Consider the $[4\times 4,4,4]_{\F_2}$ MRD code $\D\subseteq \Mat(4,\F_2)$ generated by the following matrices
    $$ D_1=\begin{pmatrix}
        1 & 0 & 0 & 0\\
        0 & 1 & 0 & 0\\
        0 & 0 & 1 & 0\\
        0 & 0 & 0 & 1
    \end{pmatrix}, \;\; D_2=\begin{pmatrix}
        0 & 0 & 0 & 1\\
        1 & 0 & 0 & 1\\
        0 & 1 & 0 & 0\\
        0 & 0 & 1 & 0
    \end{pmatrix}, \;\; D_3=\begin{pmatrix}
        0 & 0 & 1 & 0\\
        0 & 0 & 1 & 1\\
        1 & 0 & 0 & 1\\
        0 & 1 & 0 & 0
    \end{pmatrix}, \;\; D_4=\begin{pmatrix}
        0 & 1 & 0 & 0\\
        0 & 1 & 1 & 0\\
        0 & 0 & 1 & 1\\
        1 & 0 & 0 & 1
    \end{pmatrix}.$$
    Consider now $\phi_{1,2}(\D)\subseteq \Mat(5,\F_2)$, which is the space generated by the matrices 
   \begin{align*}
   \phi_{1,2}(D_1)&=\begin{pmatrix}
        0 & 1 & 0 & 0 & 0\\
        0 & 0 & 1 & 0 & 0\\
        0 & 0 & 0 & 1 & 0\\
        0 & 0 & 0 & 0 & 1\\ 
        0 & 0 & 0 & 0 & 0
    \end{pmatrix}, \; \phi_{1,2}(D_2)=\begin{pmatrix}
        0 & 0 & 0 & 0 & 1\\
        0 & 1 & 0 & 0 & 1\\
        0 & 0 & 1 & 0 & 0\\
        0 & 0 & 0 & 1 & 0\\
        0 & 0 & 0 & 0 & 0
    \end{pmatrix},\\
    \phi_{1,2}(D_3)&=\begin{pmatrix}
        0 & 0 & 0 & 1 & 0\\
        0 & 0 & 0 & 1 & 1\\
        0 & 1 & 0 & 0 & 1\\
        0 & 0 & 1 & 0 & 0\\
        0 & 0 & 0 & 0 & 0
    \end{pmatrix}, \; \phi_{1,2}(D_4)=\begin{pmatrix}
        0 & 0 & 1 & 0 & 0\\
        0 & 0 & 1 & 1 & 0\\
        0 & 0 & 0 & 1 & 1\\
        0 & 1 & 0 & 0 & 1\\
        0 & 0 & 0 & 0 & 0
    \end{pmatrix}.
    \end{align*}
    By applying $\pi$ to $\phi_{1,2}(\D)$, we get that all the entries below the main diagonal are zeros, hence, the code $\C_\D=\pi\circ\phi_{1,2}(\D)\subseteq \U(5,\F_2)$ is the $\{5,4,8\}_{\F_2}$ MFRD code generated by 
       \begin{align*}
   M_1&=\begin{pmatrix}
        0 & 1 & 0 & 0 & 0\\
        0 & 0 & 1 & 0 & 0\\
        0 & 0 & 0 & 1 & 0\\
        0 & 0 & 0 & 0 & 1\\ 
        0 & 0 & 0 & 0 & 0
    \end{pmatrix}, \; M_2=\begin{pmatrix}
        0 & 0 & 0 & 0 & 1\\
        0 & 1 & 0 & 0 & 1\\
        0 & 0 & 1 & 0 & 0\\
        0 & 0 & 0 & 1 & 0\\
        0 & 0 & 0 & 0 & 0
    \end{pmatrix},\\
    M_3&=\begin{pmatrix}
        0 & 0 & 0 & 1 & 0\\
        0 & 0 & 0 & 1 & 1\\
        0 & 0 & 0 & 0 & 1\\
        0 & 0 & 0 & 0 & 0\\
        0 & 0 & 0 & 0 & 0
    \end{pmatrix}, \; M_3=\begin{pmatrix}
        0 & 0 & 1 & 0 & 0\\
        0 & 0 & 1 & 1 & 0\\
        0 & 0 & 0 & 1 & 1\\
        0 & 0 & 0 & 0 & 1\\
        0 & 0 & 0 & 0 & 0
    \end{pmatrix}.
    \end{align*}
\end{example}

\subsection{MFRD codes with small minimum flag-rank  distance}

In this subsection we focus on constructions of MFRD codes with small minimum distance $\delta \leq n$. 

Of course, the only trivial MFRD code with $\delta=1$ is the whole space $\U(n,\F)$. Thus, the only meaningful cases are when $\delta \geq 2$. 

We are now going to prove a very useful result, showing that, when $\delta \le n$, we can restrict ourselves to study MFRD codes containing copies of smaller upper triangular matrix spaces. For an integer $0\leq r \leq n$, let us denote by $\mathrm{D}(r,n,\F)$ the set of upper triangular matrices with only nonzero entries in the first $r$ diagonals, that is
$$ \mathrm{D}(r,n,\F):=\left\{ M=(m_{ij})\in \U(n,\F) \, :\, m_{ij}=0 \mbox{ if } j-i\ge r\right\}.$$

We first need  the following auxiliary result.

\begin{lemma}\label{lem:low_weight_diagonal}
 Let $M\in \U(n,\F)$ be such that $\wf(M)=r\leq n$. Then, $M\in \mathrm{D}(r,n,\F)$.
\end{lemma}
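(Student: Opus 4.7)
The plan is to track which submatrices $M_{[\ell]}$ a given nonzero entry $m_{ij}$ of $M$ contributes to, and then count rank contributions diagonal-by-diagonal.

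First, I would unwind the definition: $M_{[\ell]}$ consists of the rows $1,\ldots,\ell$ and columns $\ell,\ldots,n$ of $M$. Hence, for an entry $m_{ij}$ with $i \le j$, the position $(i,j)$ of $M$ lies inside $M_{[\ell]}$ precisely when $i \le \ell \le j$. This is the only observation about the geometry of the submatrices that is needed.

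The core of the argument is a contrapositive. Suppose, for contradiction, that $M \notin \mathrm{D}(r,n,\F)$, so there exist indices $i\le j$ with $j-i \ge r$ and $m_{ij} \ne 0$. By the observation above, $m_{ij}$ is a nonzero entry of $M_{[\ell]}$ for every $\ell \in \{i,i+1,\ldots,j\}$, which forces $\rk(M_{[\ell]}) \ge 1$ for each such $\ell$. Since the remaining terms $\rk(M_{[\ell]})$ are nonnegative, this immediately gives
\[
\wf(M) \;=\; \sum_{\ell=1}^n \rk(M_{[\ell]}) \;\ge\; j-i+1 \;\ge\; r+1,
\]
contradicting the assumption $\wf(M) = r$. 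Therefore every entry $m_{ij}$ with $j-i \ge r$ vanishes, which is exactly the claim that $M \in \mathrm{D}(r,n,\F)$.

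I do not foresee any real obstacle here: the statement is a direct bookkeeping consequence of how the windowed submatrices $M_{[\ell]}$ overlap, and it does not require invoking Lemma \ref{lem:rank-1} at all. The only place one has to be careful is keeping straight the indexing convention for $M_{[\ell]}$ (top-rightmost, not top-leftmost); once that is pinned down, the inequality $\wf(M) \ge j - i + 1$ is automatic.
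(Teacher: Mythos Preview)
Your argument is correct and is essentially identical to the paper's own proof: both observe that a nonzero entry $m_{ij}$ with $j-i\ge r$ appears in each of the (at least) $r+1$ submatrices $M_{[\ell]}$ for $i\le \ell\le j$, forcing $\wf(M)\ge r+1$. The only difference is that you spell out the index range explicitly, whereas the paper states it in one sentence.
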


\begin{proof}
Since the zero matrix belongs to $\mathrm{D}(r,n,\F)$, we can assume $M$ to be nonzero. If the matrix $M$ has a nonzero entry which is not in the first $r$ diagonals, then this entry appears in more than $r$  submatrices $M_{[i]}$'s, yielding each time rank at least one. Hence the flag-rank weight of $M$ is at least $r+1$.
\end{proof}

\begin{theorem}\label{thm:diagonal_projection}
Let $\delta \in [n]$, and let $\C$ be an $\ntt{n,f(n,\delta),r}$ code. The following are equivalent:
\begin{enumerate}
    \item $r=\delta$, that is, $\C$ is an $\ntt{n,f(n,\delta),\delta}$ MFRD code.
    \item $\C\cap \mathrm{D}(\delta-1,n,\F)$ is an $\ntt{n,y,\geq \delta}$ code whose codimension in $\mathrm{D}(\delta-1,n,\F)$ is $g(\delta)$.
    %coincides with the codimension of $\C$ in $\U(n,\F)$.
    In other words,
    $$ y=f(n,\delta)-\frac{(n-\delta+1)(n-\delta+2)}{2}.$$
    \item $(\C\cap \mathrm{D}(\delta-1,n,\F)) \oplus (\phi_{1,\delta}(\U(n-\delta+1,\F)))$ is an $\ntt{n,f(n,\delta),\delta}$ MFRD code.
\end{enumerate}
\end{theorem}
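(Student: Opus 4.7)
Let $W := \mathrm{D}(\delta-1, n, \F)$ and $V := \phi_{1, \delta}(\U(n-\delta+1, \F))$. The starting observation is that $W$ and $V$ are supported on complementary coordinate sets of $\U(n, \F)$: a matrix in $W$ has zero entries at all positions $(i, j)$ with $j - i \geq \delta - 1$, while a matrix in $V$ has zero entries at all positions $(i, j)$ with $j - i \leq \delta - 2$. Consequently $\U(n, \F) = W \oplus V$, with $\dim V = \frac{(n-\delta+1)(n-\delta+2)}{2}$ and $\dim W = \frac{n(n+1)}{2} - \dim V$.

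The technical core of the argument is the estimate $\wf(M_W + M_V) \geq \delta$ for every $M_W \in W$ and $M_V \in V \setminus \{0\}$. Indeed, $M_V$ carries a nonzero entry at some position $(i_0, j_0)$ with $j_0 - i_0 \geq \delta - 1$; by support disjointness $(M_W + M_V)_{i_0, j_0} = (M_V)_{i_0, j_0} \neq 0$, and since this entry belongs to $\MM{(M_W+M_V)}{k}$ for every $k \in \{i_0, \ldots, j_0\}$, at least $j_0 - i_0 + 1 \geq \delta$ of these submatrices have positive rank. In particular, taking $M_W = 0$ shows that every nonzero element of $V$ has flag-rank weight at least $\delta$.

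To prove $(1) \Rightarrow (2)$ and $(3)$, assume $\C$ is an $\ntt{n, f(n,\delta), \delta}$ MFRD code; then $\df(\C \cap W) \geq \delta$ trivially. Define $\C' := (\C \cap W) \oplus V$ (a direct sum since $W \cap V = \{0\}$). The estimate above combined with $\df(\C \cap W) \geq \delta$ gives $\df(\C') \geq \delta$, so Theorem~\ref{th:Singleton_codimension} yields $\dim \C' \leq f(n, \delta)$. Conversely, the standard inequality $\dim(\C \cap W) \geq \dim \C + \dim W - \frac{n(n+1)}{2} = f(n, \delta) - \dim V$ forces $\dim \C' \geq f(n, \delta)$. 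Equality must hold everywhere, simultaneously proving (3) and (2), the latter via $\dim(\C \cap W) = f(n, \delta) - \dim V = y$.

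For the converse implications, under either (2) or (3) one has $\df(\C \cap W) \geq \delta$: directly from (2), and from $\C \cap W \subseteq \C'$ with $\df(\C') = \delta$ in the case of (3). For any nonzero $M \in \C$ with $\wf(M) \leq \delta - 1 < n$, Lemma~\ref{lem:low_weight_diagonal} places $M$ in $\mathrm{D}(\wf(M), n, \F) \subseteq W$, hence in $\C \cap W$, contradicting $\df(\C \cap W) \geq \delta$. Thus $\df(\C) \geq \delta$, and Theorem~\ref{th:Singleton_codimension} together with the strict monotonicity of $g$ applied to $\dim \C = f(n, \delta)$ pins $\df(\C) = \delta$, giving (1). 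The entire proof reduces to the support-disjointness estimate and some dimension bookkeeping; the only mildly delicate checkpoint is the applicability of Lemma~\ref{lem:low_weight_diagonal}, which is guaranteed by the standing hypothesis $\delta \leq n$.
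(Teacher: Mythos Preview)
Your proof is correct and follows essentially the same approach as the paper: both arguments hinge on the decomposition $\U(n,\F)=\mathrm{D}(\delta-1,n,\F)\oplus\phi_{1,\delta}(\U(n-\delta+1,\F))$, the fact (equivalent to the contrapositive of Lemma~\ref{lem:low_weight_diagonal}) that any matrix with a nonzero entry outside the first $\delta-1$ diagonals has flag-rank weight at least $\delta$, and an appeal to the Singleton-like bound. Your write-up is in fact slightly more explicit than the paper's, in that you spell out the dimension inequality $\dim(\C\cap W)\geq \dim\C+\dim W-\dim\U(n,\F)$ and you note the strict monotonicity of $g$ needed to pin down $\df(\C)=\delta$ exactly; the paper handles both points tacitly.
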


\begin{proof}
(2) $\Longleftrightarrow$ (3): By Lemma \ref{lem:low_weight_diagonal}, adding elements not in $\mathrm{D}(\delta-1,n,\F)$ cannot bring the minimum flag-rank distance of a code below $\delta$.

(1) $\Longrightarrow$ (2): Let $g(\delta)$ be the codimension of $\C$. Assume on the contrary that $\C\cap \mathrm{D}(\delta-1,n,\F)$ has codimension $g(\delta)-\epsilon$ in $\mathrm{D}(\delta-1,n,\F)$, with $\epsilon \ge 1$. Then the code $\C':=(\C\cap\mathrm{D}(\delta-1,n,\F))\oplus (\phi_{1,\delta}(\U(n-\delta+1,\F)))$ has codimension $g(\delta)-\epsilon<g(\delta)$ in $\U(n,\F)$. Moreover, by Lemma \ref{lem:low_weight_diagonal} every matrix with a nonzero entry in the last $n-\delta+1$ upper diagonal has flag-rank weight at least $\delta$, and hence we can deduce that the minimum flag-rank distance of $\C'$ is at least $\delta$. However, this contradicts Theorem \ref{th:Singleton_codimension}.

(2) $\Longrightarrow$ (1): By contradiction, assume that $\C$ is not MFRD. Then there exists a nonzero  $M\in\C$ with $\wf(M)=r\le\delta-1$. By Lemma \ref{lem:low_weight_diagonal} this implies that $M\in\mathrm{D}(\delta-1,n,\F)$, and thus the minimum flag-rank distance of $\C\cap \mathrm{D}(\delta-1,n,\F)$ is strictly smaller than $\delta$, contradicting the assumption.
\end{proof}

A direct consequence of Theorem \ref{thm:diagonal_projection} is that, for $\delta \leq n$, an $\ntt{n,f(n,\delta),\delta}$ MFRD code exists if and only if there exists an MFRD code with the same parameters of the form 
$$ \C'=\mathcal D \oplus (\phi_{1,\delta}(\U(n-\delta+1,\F))).$$
where $\mathcal D\subseteq \mathrm{D}(\delta-1,n,\F)$. In particular, in the next constructions we will focus on constructing the summand $\mathcal D\subseteq \mathrm{D}(\delta-1,n,\F)$, which must be an $\ntt{n,f(n,\delta)-\frac{(n-\delta+1)(n-\delta+2)}{2},\geq \delta}$ code.

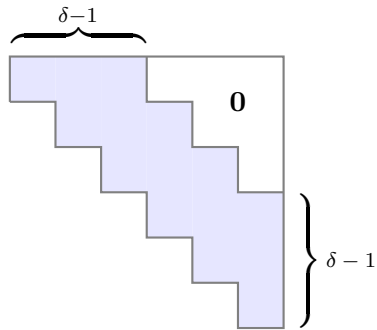
\begin{figure}[ht!]
\centering
\begin{tikzpicture}[thick, scale=0.6]

\fill[blue!10] (0,1) -| (0,0) -| (1,0) -| (1,1) -| cycle;
%\draw[fill=blue!10] (0,1) -| (0,0) -| (1,0) -| (1,1) -| cycle;
\fill[blue!10] (1,0) -| (1,-1) -| (2,-1) -| (2,1) -| cycle;
\fill[blue!10] (2,-1) -| (2,-2) -| (3,-2) -| (3,1) -| cycle;
\fill[blue!10] (3,-2) -| (3,-3) -| (4,-3) -| (4,0) -| cycle;
\fill[blue!10] (4,-3) -| (4,-4) -| (5,-4) -| (5,-1) -| cycle;
\fill[blue!10] (5,-4) -| (5,-5) -| (6,-5) -| (6,-2) -| cycle;
\node at (5,0) {$\boldsymbol{0}$};
\node at (1.5,1.7) {$\overbrace{\qquad \qquad \;\;}^{\delta-1}$};
\node at (7.2,-3.5) {\rotatebox[origin=c]{-90}{$\overbrace{\qquad \qquad \;\;}^{\text{\rotatebox[origin=c]{90}{\;$\delta-1$}}}$}};

\draw[help lines, thick] (0,0) -- (1,0)-- (1,-1) -- (2,-1) -- (2,-2)-- (3,-2) -- (3,-3)-- (4,-3) -- (4,-4)--(5,-4) -- (5,-5) -- (6,-5) -- (6,1) -- (0,1)--(0,0);
\draw[help lines, thick] (3,1) --  (3,0) -- (4,0) -- (4,-1) -- (5,-1) -- (5,-2) -- (6, -2);
\end{tikzpicture}
\caption{The blue area in the picture represents the free entries of the matrices in $\mathrm{D}(\delta-1,n,\F)$.}
\label{pic_proj}
\end{figure}

Using a similar idea to the one used for Theorem \ref{thm:diagonal_projection}, we can relate the existence of MFRD codes of small minimum flag-rank distance with the existence of certain Hamming-metric codes.

\begin{proposition}\label{prop:flag_to_Hamming}
The spaces $(\F^n,\dd_{\mathrm{H}})$ and   $(\mathrm{D}(1,n,\F),\dd_{\ff})$ are naturally isometric. Moreover, if $\C$ is an $\ntt{n,t,\delta}$ code, then $\C\cap \mathrm{D}(1,n,\F)$ is an $[n, \geq n-\frac{n(n+1)}{2}+t,\geq \delta]_{\F}$ code endowed with the Hamming metric.
\end{proposition}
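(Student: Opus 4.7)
The plan is to first unfold the definitions: since $\mathrm{D}(r,n,\F)$ consists of upper triangular matrices whose nonzero entries satisfy $j-i < r$, the space $\mathrm{D}(1,n,\F)$ consists precisely of diagonal matrices in $\U(n,\F)$, and hence is naturally identified with $\F^n$ via the linear isomorphism
$$\psi\colon \F^n \longrightarrow \mathrm{D}(1,n,\F),\qquad (v_1,\ldots,v_n)\longmapsto \diag(v_1,\ldots,v_n).$$
The core computation is showing that $\w_{\ff}(\psi(v)) = \wt_{\HH}(v)$ for every $v\in\F^n$. For $M = \psi(v)$, one inspects the top-rightmost $i\times(n-i+1)$ submatrix $M_{[i]}$: its entry in position $(p,q)$ equals the $(p,q+i-1)$-entry of $M$, which is nonzero only when $p = q+i-1$ and $p\in[i]$, forcing $p = i$ and $q = 1$. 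Hence $M_{[i]}$ contains at most one nonzero entry, namely $v_i$ in position $(i,1)$, so $\rk(M_{[i]}) = \mathbb{1}_{v_i\neq 0}$. Summing over $i$ yields the desired identity, and the isometry claim follows.

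For the second part, observe that $\C\cap\mathrm{D}(1,n,\F)$ is an $\F$-subspace of $\mathrm{D}(1,n,\F)$, which under $\psi^{-1}$ becomes a linear Hamming-metric code in $\F^n$. The dimension estimate comes from the standard Grassmann formula: since $\C + \mathrm{D}(1,n,\F) \subseteq \U(n,\F)$ has dimension at most $\frac{n(n+1)}{2}$, while $\dim_{\F}\C = t$ and $\dim_{\F}\mathrm{D}(1,n,\F) = n$, we get
$$\dim_{\F}(\C\cap\mathrm{D}(1,n,\F)) = \dim_{\F}\C + \dim_{\F}\mathrm{D}(1,n,\F) - \dim_{\F}(\C+\mathrm{D}(1,n,\F)) \geq t + n - \tfrac{n(n+1)}{2}.$$
For the minimum distance, note that $\C\cap\mathrm{D}(1,n,\F)\subseteq \C$, so every nonzero element of the intersection has flag-rank weight at least $\delta$; by the isometry established above, the corresponding Hamming weight is also at least $\delta$.

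There is no real obstacle here: the whole statement is essentially an unpacking of definitions, with the only mildly nontrivial step being the rank computation of the submatrices $M_{[i]}$ for a diagonal matrix $M$, which reduces to checking that exactly one position in $M_{[i]}$ can be nonzero. Everything else is dimension counting and the trivial observation that minimum distance can only grow when restricting to a subcode.
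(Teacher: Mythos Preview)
Your proof is correct and follows essentially the same approach as the paper: the paper defines the same map $\psi$ and declares the isometry a ``direct checking,'' then argues the dimension bound via codimensions (equivalent to your Grassmann formula) and the distance bound via the isometry. Your version simply spells out the details, in particular the explicit verification that $M_{[i]}$ has a single potentially nonzero entry $v_i$, which the paper omits.
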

\begin{proof}
The first part of the statement is a direct checking that the map $\psi:\F^n\longrightarrow \mathrm{D}(1,n,\F)$ mapping a vector $v$ in the diagonal matrix whose diagonal is $v$ is an isometry.
Now, let $\overline{\C}:=\C\cap \mathrm{D}(1,n,\F)$ considered as an Hamming-metric code. Then, its codimension in $\mathrm{D}(1,n,\F)$ cannot be larger than the codimension of $\C$ in $\U(n,\F)$. Concerning its minimum Hamming distance, the fact that it must be at least $\delta$ follows from the isometry between $(\mathrm{D}(1,n,\F),\dd_{\ff})$ and $(\F^n,\dd_{\mathrm{H}})$.
\end{proof}

We can now deduce from Proposition \ref{prop:flag_to_Hamming} that MFRD codes cannot exist for every set of parameters over any finite field, in contrast to MRD codes.

\begin{corollary}\label{cor:nonexistence}
Let $\delta \geq 2$ be such that 
$2\leq g(\delta)\leq n-1$ and let $$m(\delta):=\max\{ m \,:\, \mbox{there exists an } [m,m-g(\delta),\delta]_{\F} \mbox{ code } \}.$$ 
If $n>m(\delta)$, then there is no $\ntt{n,f(n,\delta),\delta}$ MFRD code. In particular, when $\F$ is finite, if an $\ntt{n,f(n,\delta),\delta}$ MFRD code exists, then 
$$n\leq(\delta-2)\frac{|\F|^{g(\delta)}-1}{|\F|^{g(\delta)-1}-1}.$$
\end{corollary}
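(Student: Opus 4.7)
I plan to derive both halves of the corollary from the Hamming-metric subcode given by Proposition \ref{prop:flag_to_Hamming}, augmented, for the quantitative second part, by an incidence-counting (Plotkin-type) argument on the parity-check matrix of that subcode.

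First, for the nonexistence claim, let $\C$ be an $\ntt{n,f(n,\delta),\delta}$ MFRD code. Because $\dim_\F \U(n,\F) = \tfrac{n(n+1)}{2}$ and $f(n,\delta) = \tfrac{n(n+1)}{2} - g(\delta)$, the code $\C$ has codimension $g(\delta)$ in $\U(n,\F)$. Applying Proposition \ref{prop:flag_to_Hamming}, the intersection $\C \cap \mathrm{D}(1,n,\F)$ transports via the canonical isometry to a linear Hamming-metric code $\overline{\C} \subseteq \F^n$ of length $n$, dimension at least $n - g(\delta)$, and minimum Hamming distance at least $\delta$. Passing to a linear subcode of dimension exactly $n - g(\delta)$ (which can only raise the minimum distance) yields an $[n, n - g(\delta), \geq\delta]_\F$ code, so by definition of $m(\delta)$ one forces $n \leq m(\delta)$. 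The contrapositive is the first assertion.

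For the quantitative bound over $\F = \F_q$, I would fix a parity-check matrix $H \in \Mat(g(\delta), n, \F_q)$ of the Hamming-metric subcode above, with columns $h_1,\dots,h_n \in \F_q^{g(\delta)}\setminus\{0\}$, and count hyperplane incidences. Writing $a_V := |\{i : h_i \in V\}|$ for each hyperplane $V$ of $\F_q^{g(\delta)}$, and using that every nonzero vector lies in exactly $\tfrac{q^{g(\delta)-1}-1}{q-1}$ of the $\tfrac{q^{g(\delta)}-1}{q-1}$ hyperplanes, double counting gives
$$ \sum_{V} a_V \;=\; n \cdot \frac{q^{g(\delta)-1}-1}{q-1}. $$
Once the uniform bound $a_V \leq \delta-2$ is in place, averaging rearranges to the claimed inequality.

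The step I expect to require the most care is justifying $a_V \leq \delta-2$ for every hyperplane $V$. The minimum-Hamming-distance condition on $\overline{\C}$ alone does not suffice, since $\delta-1$ linearly independent columns can coexist in a single hyperplane of $\F_q^{g(\delta)}$ whenever $\delta-1 \leq g(\delta)-1$, which is generic. The restriction must therefore be drawn from the stronger MFRD structure: a hyperplane containing $\delta-1$ of the columns should translate, through the rank conditions on the slices $M_{[i]}$ governing the flag-rank weight, into a nonzero codeword of $\C$ of flag-rank weight strictly less than $\delta$, contradicting the minimum-distance hypothesis. Making this incidence-to-rank translation precise, using how the off-diagonal entries of $\U(n,\F)$ interact with the diagonal positions indexed by the $h_i$, is where I expect the real work of the second statement to lie.
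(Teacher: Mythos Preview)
Your argument for the first assertion (nonexistence when $n>m(\delta)$) is correct and matches the paper's use of Proposition~\ref{prop:flag_to_Hamming}.

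For the quantitative bound you diverge from the paper in a way that manufactures the very obstacle you then struggle with. By passing to a subcode of dimension exactly $n-g(\delta)$, you force the parity-check matrix to live in $\Mat(g(\delta),n,\F_q)$, and in that ambient space the bound $a_V\le \delta-2$ genuinely does \emph{not} follow from $\dd_{\HH}\ge\delta$: since $g(\delta)>\delta-1$ for $\delta\ge 3$, any $\delta-1$ independent vectors fit inside some hyperplane of $\F_q^{g(\delta)}$. The paper does \emph{not} take this step. It works directly with $\overline{\C}=\C\cap\mathrm{D}(1,n,\F)$ of dimension $s$, uses its $(n-s)\times n$ parity-check matrix, and performs the incidence count in $\PG(n-s-1,\F)$. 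The hyperplane claim is made in this \emph{smaller} ambient space, and the resulting inequality with exponent $n-s$ is then compared to the target via $n-s\le g(\delta)$.

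Your proposed repair---translating a hyperplane containing $\delta-1$ of the $h_i$ into a low-flag-rank-weight element of $\C$ through the off-diagonal entries---will not work as sketched. Once you have projected onto the diagonal subcode, the off-diagonal part of $\C$ is decoupled from the relation you have found: a linear dependence among the columns $h_i$ places no constraint on the ranks of the slices $M_{[i]}$ for any particular $M\in\C$, because the above-diagonal entries of $M$ are unconstrained by that relation. There is no mechanism by which ``$h_{i_1},\dots,h_{i_{\delta-1}}$ lie in a common hyperplane'' produces a nonzero matrix in $\C$ with $\wf<\delta$. The correct move is structural rather than analytic: do not pass to a subcode, and run the double count in $\PG(n-s-1,\F)$ as the paper does.
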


\begin{proof}
 By Proposition \ref{prop:flag_to_Hamming}, if an $\ntt{n,f(n,\delta),\delta}$ code exists, then $\overline{\C}:=\C\cap \mathrm{D}(1,n,\F)$ is an $[n,s, \geq \delta]_{\F}$ code with $n-2\geq s \geq n-g(\delta)\ge 1$. By looking at the columns of a parity-check matrix for $\overline{\C}$ as a set $\mathcal M$ of $n$ points in 
 $\PG(n-s-1,\F)$, we have that  no $\delta-1$ of them are contained in the same $(n-s-2)$-dimensional projective subspace  (that is, a hyperplane).
 Using this fact and a double-counting argument we get,
 $$ n|\PG(n-s-2,\F)|=\sum_{\Lambda \in \mathcal{H}} |\Lambda \cap \mathcal M| \leq (\delta-2)|\PG(n-s-1,\F)|, $$
 where $\mathcal{H}:=\{ \Lambda \subseteq \PG(n-s-1,\F) \st \dim(\Lambda)=n-s-2 \}$. From the above equality we derive
 $$ n \leq (\delta-2)\frac{|\PG(n-s-1,\F)|}{|\PG(n-s-2,\F)|}=(\delta-2)\frac{|\F|^{n-s}-1}{|\F|^{n-s-1}-1}\leq (\delta-2)\frac{|\F|^{g(\delta)}-1}{|\F|^{g(\delta)-1}-1}.$$
\end{proof}

%%%%%%%%%%%%%%%%%%%%%%%%%%%%%%%%%%%%%%%%%%%%%%%%%%%%%%%%%%%%%%%%%%%%%%%%%%%%%%%%%%%%%%%%%%%%%%%%%%%%%

\subsubsection{MFRD codes with $\delta=2$}\label{sec:d=2}

When $\delta=2$, an MFRD code $\C$ has codimension $g(\delta)=1$ and hence it is an  $\ntt{n,\frac{n(n+1)}{2}-1,2}$ code. By Theorem \ref{thm:diagonal_projection}, we have that $\C\cap \mathrm{D}(1,n,\F)$ is an $\ntt{n,n-1,\geq 2}$ code, that is an $[n,n-1,\geq 2]_\F$ code in the Hamming metric by Proposition \ref{prop:flag_to_Hamming}. Thus, MFRD codes with $\delta=2$ exist if and only if there exist an $[n,n-1,2]_\F$ code in the Hamming metric. Such a code $\C_1$ exists over any field $\F$: we can take for instance 
$$\C_1=(1,1,\ldots,1)^\perp.$$
Thus, using Theorem \ref{thm:diagonal_projection}, the $\ntt{n,\frac{n(n+1)}{2}-1}$ code 
$$\overline{\C}_1\oplus\overline{\U}(n-1,\F)$$
is an $\ntt{n,\frac{n(n+1)}{2}-1,2}$ code over any field $\F$, where  $\overline{\C}_1$ is the image of $\C_1$ under the natural mapping $\psi:\F^n \rightarrow \mathrm{D}(1,n,\F)$ and $\overline{\U}(n-1,\F)=\phi_{1,2}(\U(n-1,\F))$.

%%%%%%%%%%%%%%%%%%%%%%%%%%%%%%%%%%%%%%%%%%%%%%%%%%%%%%%%%%%%%%%%%%%%%%%%%%%%%%%%%%%%%%%%%%%%%%%%%%%%

\subsubsection{MFRD codes with $\delta=3$}

Things are much more interesting when studying the case $\delta=3$. Also in this case, we can completely characterize the parameters for which an $\ntt{n,f(n,3),3}$ MFRD code exists. We will do it by considering the intersection with $\mathrm{D}(2,n,\F)$, as derived in Theorem \ref{thm:diagonal_projection}. However, this is not enough. We will then study which matrices in $\mathrm{D}(2,n,\F)$ have flag-rank smaller than $3$, and characterize codes avoiding these matrices. This will be done by using a geometric argument on subspaces of $\mathrm{D}(2,n,\F)$ endowed with the Hamming metric.

Let us introduce the following notation. For $i\in[n]$, let $E_i$ be the matrix whose $(i,i)$-entry is~$1$ and all the other entries are $0$ and, for $i\in[n-1]$, let $D_i$ be the matrix whose $(i,i+1)$-entry is~$1$ and all the other entries are $0$. So, the set 
$$ \{E_i \,:\, i \in [n]\} \cup \{D_i \,:\, i \in [n-1]\}$$
is a basis of $\mathrm{D}(2,n,\F)$. Let us consider the vectorization isomorphism 
$$ \begin{array}{rccc} \varrho:& \mathrm{D}(2,n,\F) & \longrightarrow & \F^{2n-1} \\
&\sum\limits_{i=1}^n \lambda_iE_i + \sum\limits_{i=1}^{n-1}\mu_iD_i & \longmapsto & (\lambda_1, \mu_1, \lambda_2, \ldots, \mu_{n-1},\lambda_n).
\end{array}$$
This allows to consider elements in $\mathrm{D}(2,n,\F)$ as vectors in $\F^{2n-1}$ endowed with the Hamming weight. We are going to look more carefully at their supports in $[2n-1]$. We define the following subsets of $[2n-1]$:
\begin{align*}X_{i,j}&=\{2i-1,2j-1\}, &1\leq i <j \leq n, \\
Y_i&=\{2i\}, & 1\leq i \leq n-1, \\ 
Z_i&=\{2i-1,2i,2i+1\},& 1\leq i \leq n-1.  
\end{align*}
Furthermore, we define $\mathcal N$ to be the collection of such sets, that is 
\begin{equation}\label{eq:Nset} \mathcal N=\{X_{i,j} \,:\, 1\leq i<j \leq n \}\cup \{Y_i \,:\, 1\leq i \leq n-1\} \cup \{Z_i \,:\, 1\leq i \leq n-1\}. \end{equation}
With this notation in mind, we can characterize the elements of $\mathrm{D}(2,n,\F)$ with flag-rank smaller than $3$ in terms of their image under $\varrho$.

\begin{proposition}\label{prop:Hamming_supports}
    Let $A \in \mathrm{D}(2,n,\F)$. Then, $\wf(A)<3$ if and only if there exists some $I\in\mathcal N$ such that 
    $$\supp(\varrho(A))\subseteq I.$$
    In particular, $\C\subseteq \mathrm{D}(2,n,F)$ is an $\ntt{n,2n-4,\geq 3}$ code if and only if there is no nonzero codeword in $\varrho(\C)$ whose support is contained in one of the sets in $\mathcal N$. 
\end{proposition}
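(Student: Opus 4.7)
The plan is to reduce to the pointwise claim about a single matrix $A \in \mathrm{D}(2,n,\F)$: the second assertion is an immediate consequence of the first, since $\df(\C) \geq 3$ is equivalent to every nonzero codeword having flag-rank weight at least $3$. So the work amounts to characterising those $A \in \mathrm{D}(2,n,\F)$ with $\wf(A) < 3$. Writing $A$ via its diagonal $(\lambda_1,\ldots,\lambda_n)$ and superdiagonal $(\mu_1,\ldots,\mu_{n-1})$, a direct inspection shows that the only entries of $A$ appearing in $M_{[i]}$ are $\mu_{i-1}$, $\lambda_i$, and $\mu_i$ (with the convention $\mu_0 = \mu_n = 0$), arranged as the $2 \times 2$ lower-triangular block
\[\begin{pmatrix}\mu_{i-1} & 0 \\ \lambda_i & \mu_i\end{pmatrix}.\]
In particular, $\rk(M_{[i]}) = 2$ iff $\mu_{i-1}$ and $\mu_i$ are both nonzero, and otherwise $\rk(M_{[i]}) \in \{0,1\}$.

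The $(\Leftarrow)$ direction is a short case-by-case verification using this formula. If $\supp(\varrho(A)) \subseteq X_{i,j}$, only $\lambda_i, \lambda_j$ can be nonzero and they contribute $1$ to $\rk(M_{[i]})$ and $\rk(M_{[j]})$ respectively, so $\wf(A) \leq 2$. If $\supp(\varrho(A)) \subseteq Y_i$, only $\mu_i$ is nonzero and it contributes rank one to each of $M_{[i]}$ and $M_{[i+1]}$. If $\supp(\varrho(A)) \subseteq Z_i$, then at worst $\lambda_i, \mu_i, \lambda_{i+1}$ are nonzero, so $M_{[i]}$ and $M_{[i+1]}$ each have rank at most one and all other $M_{[k]}$ vanish; again $\wf(A) \leq 2$.

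For the $(\Rightarrow)$ direction I split on $s := |\{k : \mu_k \neq 0\}|$. When $s = 0$, we have $\wf(A) = |\{i : \lambda_i \neq 0\}|$, so $\wf(A) < 3$ forces $\supp(\varrho(A))$ to lie in some $X_{i,j}$. When $s = 1$, say $\mu_{i_0} \neq 0$, the submatrices $M_{[i_0]}$ and $M_{[i_0+1]}$ already have rank $\geq 1$, hence $\wf(A) \leq 2$ forces $\rk(M_{[k]}) = 0$ for $k \notin \{i_0, i_0+1\}$, which in turn forces $\lambda_k = 0$ for such $k$ and yields $\supp(\varrho(A)) \subseteq Z_{i_0}$. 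When $s \geq 2$, pick $i_0 < i_1$ with $\mu_{i_0}, \mu_{i_1}$ nonzero: if $i_1 = i_0+1$ the block formula gives $\rk(M_{[i_0+1]}) = 2$, hence $\wf(A) \geq 1 + 2 + 1 = 4$; if $i_1 \geq i_0 + 2$ then $M_{[i_0]}, M_{[i_0+1]}, M_{[i_1]}, M_{[i_1+1]}$ are four distinct nonzero submatrices, giving $\wf(A) \geq 4$. In either subcase we contradict $\wf(A) < 3$.

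The only real bookkeeping concern is the boundary behaviour at $i = 1$ and $i = n$ in the description of $M_{[i]}$, but the conventions $\mu_0 = \mu_n = 0$ absorb these uniformly, so no deeper obstacle arises.
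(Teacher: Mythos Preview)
Your proof is correct and follows essentially the same approach as the paper: a direct case analysis based on which entries of $A$ are nonzero, using that the only entries of $A$ visible in $M_{[i]}$ are $\mu_{i-1},\lambda_i,\mu_i$. The paper organizes the forward direction by the sizes of $J\cap\mathfrak{O}$ and $J\cap\mathfrak{E}$ (the odd and even parts of the support), whereas you split on $s=|\{k:\mu_k\neq 0\}|$; these are equivalent decompositions, and your explicit $2\times 2$ block description of $M_{[i]}$ makes the $s\ge 2$ case marginally cleaner than the paper's treatment.
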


\begin{proof} 
 Let $A\in \mathrm{D}(2,n,\F)$, and let us write it as 
 $$A=\sum\limits_{i=1}^n\lambda_iE_i+\sum\limits_{i=1}^{n-1}\mu_iD_i.$$
 Via the isomorphism $\varrho$, we can see that 
 $$J:=\supp(\varrho(A))=\{2i-1 \,:\,\lambda_i \neq 0\}\cup \{2i \,:\, \mu_i\neq 0\}.$$
 It is straightforward to verify that if $J\subseteq I$ for some $I \in \mathcal N$, then $\wf(A)\leq 2$.

 Viceversa, assume that $J$ is not contained in any $I\in \mathcal N$. Denote by $\mathfrak E$ the set of natural even numbers and by $\mathfrak O$ the set of natural odd numbers. Then   one of the following three cases occurs. 

 \noindent {\textbf{Case I:} $|J\cap \mathfrak O|\ge 3$}. This means that there are at least three indices $i_1<i_2<i_3$ such that $\lambda_{i_1},\lambda_{i_2},\lambda_{i_3}\neq 0$. Thus $A_{[i_1]},A_{[i_2]},A_{[i_3]}$ are nonzero matrices, and their rank is at least $1$, implying $\wf(A)\geq 3$.

 \noindent {\textbf{Case II:} $|J\cap \mathfrak E|\ge 2$}. This means that there are at least two indices $i_1<i_2$ such that $\mu_{i_1},\mu_{i_2}\neq 0$. Then, the matrices $A_{[i_1]},A_{[i_2]},A_{[i_2+1]}$ are nonzero and their rank is at least $1$, implying $\wf(A)\geq 3$.

 \noindent {\textbf{Case III:} $J\cap \mathfrak E=\{2i_1\}$ and $J\cap \mathfrak O=\{2i_2-1\}$ for some $i_2\neq i_1,i_1+1$}.
This means that  the matrices $A_{[i_1]},A_{[i_1+1]},A_{[i_2]}$ are nonzero and their rank is at least $1$, implying $\wf(A)\geq 3$.
\end{proof}

Note that for $\delta=3$ the codimension of an $\ntt{n,f(n,3),3}$ MFRD code is
$$g(3)=2+ \lfloor{\sqrt{2}}\rfloor \cdot \left\lfloor{\frac{\sqrt{9}-1}{2}}\right\rfloor=3,$$
and so $f(n,3)=\frac{n(n+1)}{2}-3$.
Thus, by Theorem \ref{thm:diagonal_projection}, constructing an $\ntt{n,f(n,3),3}$ code is equivalent to construct an $\ntt{n,2n-4,3}$ code $\C$ entirely contained $\mathrm{D}(2,n,\F)$. This latter condition can be in turn equivalently reformulated via the condition on $\mathcal D:=\varrho(\C)$ of Proposition~\ref{prop:Hamming_supports}. 
Thus, we are looking for a code satisfying the following property.
\begin{definition} Let $\mathcal N\subseteq 2^{[2n+1]}$ be defined as in \eqref{eq:Nset}.
    A $[2n-1,2n-4]_\F$ code $\mathcal D$ is called 
    \textbf{$\mathcal N$-avoiding} if  for every nonzero $c \in \mathcal D$ there is no $I \in \mathcal N$ with $\supp(c) \subseteq I$. 
\end{definition}

\begin{lemma}\label{lem:avoiding} Let $n\geq 3$.
\begin{enumerate} 
\item 
 The minimum distance of an $\mathcal N$-avoiding $[2n-1,2n-4]_\F$  code  is at least $2$. 
\item The property of a code being $\mathcal N$-avoiding is invariant under multiplication of any coordinate by a nonzero scalar in $\F$.
\end{enumerate}
\end{lemma}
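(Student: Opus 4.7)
For part (1), the plan is to argue that every singleton subset of $[2n-1]$ is already contained in some member of $\mathcal{N}$, so an $\mathcal{N}$-avoiding code cannot contain a weight-one codeword. Concretely, I would split into cases by parity: if $i = 2k$ with $1 \le k \le n-1$, then $\{i\} = Y_k \in \mathcal{N}$; if $i = 2k-1$ with $1 \le k \le n-1$, then $\{i\} \subseteq Z_k \in \mathcal{N}$; and in the remaining case $i = 2n-1$, we have $\{i\} \subseteq Z_{n-1} \in \mathcal{N}$, which is well-defined since $n \ge 3$ guarantees $n-1 \ge 1$. Therefore, if $c \in \mathcal{D}$ is nonzero with $\wt_{\HH}(c) = 1$, its support is a singleton contained in some $I \in \mathcal{N}$, contradicting the $\mathcal{N}$-avoiding hypothesis.

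For part (2), the observation is that the $\mathcal{N}$-avoiding condition is phrased purely in terms of supports of codewords, and multiplying a single coordinate by a nonzero scalar does not alter the support of any vector in the ambient space $\F^{2n-1}$. So if $\lambda \in \F^\times$ and $D_\lambda$ denotes the diagonal matrix scaling the chosen coordinate by $\lambda$ and fixing the others, then $\supp(cD_\lambda) = \supp(c)$ for every $c$, so $\mathcal{D}$ is $\mathcal{N}$-avoiding if and only if $\mathcal{D}D_\lambda$ is.

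Neither part presents a real obstacle; the only thing to watch is that part (1) uses the assumption $n \ge 3$ to ensure every needed set in $\mathcal{N}$ (in particular $Z_{n-1}$) actually exists, and that part (2) is stated for a single coordinate so we do not need to worry about monomial permutations. Both points can be handled in a short unified proof.
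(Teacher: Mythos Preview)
Your proposal is correct and follows exactly the same approach as the paper's proof: part (1) by observing every singleton lies in some $I\in\mathcal N$, and part (2) by noting that the $\mathcal N$-avoiding condition depends only on supports. You simply spell out the case analysis and the invariance under coordinate scaling in slightly more detail than the paper does.
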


    \begin{proof}
\begin{enumerate}
 \item Every support of size $1$ is contained in some set in $\mathcal N$. Thus, an $\mathcal N$-avoiding code cannot contain codewords of weight $1$.
 \item The property of being $\mathcal N$-avoiding only depends on the supports of the codewords. \qedhere
\end{enumerate}
    \end{proof}

 Let $H \in \Mat(3, 2n-1, \F)$ be a parity-check matrix for an $\mathcal N$-avoiding $[2n-1,2n-4]_\F$ code.  By Lemma \ref{lem:avoiding}(1), $H$ has no zero columns. Moreover, by Lemma \ref{lem:avoiding}(2), we can multiply any column by a nonzero scalar, obtaining again an $\mathcal N$-avoiding code. 

Now, to a parity-check matrix for a $[2n-1,2n-4,\geq 2]_\F$ code, we can associate a vector of $2n-1$ points $(P_1,Q_1,P_2,\ldots,Q_{n-1},P_n)$ in $\PG(2,\F)$, corresponding to the columns of $H$. More precisely, the $(2i-1)$-th column of $H$ will be identified with the point $P_i$, for $i \in [n]$, while the $(2i)$-th column of $H$ will be identified with the point $Q_i$, for $i \in [n-1]$. 

\begin{proposition}\label{prop:Navoiding}
 Let $H\in \Mat(3,2n-1,\F)$ be a parity-check matrix of a $[2n-1,2n-4,\geq 2]_\F$ code $\mathcal D$, and let $(P_1,Q_1,\ldots Q_{n-1},P_n)$ be a vector of points in $\PG(2,\F)$  described as above. Then $\mathcal D$ is $\mathcal N$-avoiding if and only if 
 \begin{enumerate}
     \item $P_i \neq P_j$ for every $i,j \in[n]$ with $i \neq j$,
     \item $Q_i \notin \langle P_i,P_{i+1}\rangle$ for every $i \in [n-1]$.
 \end{enumerate}
\end{proposition}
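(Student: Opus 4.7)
The plan is to reduce the $\mathcal N$-avoiding property to a statement about linear (in)dependence of columns of $H$, and then interpret each of the three families of sets in $\mathcal N$ geometrically in $\PG(2,\F)$. The starting observation is that, since $\mathcal D=\ker H$, for any $I\subseteq[2n-1]$ there exists a nonzero codeword $c\in\mathcal D$ with $\supp(c)\subseteq I$ if and only if the columns of $H$ indexed by $I$ are linearly dependent over $\F$. Consequently $\mathcal D$ is $\mathcal N$-avoiding if and only if, for every $I\in\mathcal N$, the columns of $H$ indexed by $I$ are linearly independent.

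I would then treat the three types of sets separately. For $Y_i=\{2i\}$, linear independence of the single column representing $Q_i$ just means that $Q_i$ is a genuine projective point, i.e. a nonzero vector of $\F^3$; this is automatic from the hypothesis that $\mathcal D$ has minimum Hamming distance at least $2$, which guarantees that $H$ has no zero column (as already noted in Lemma \ref{lem:avoiding}(1)). For $X_{i,j}=\{2i-1,2j-1\}$ with $i<j$, linear independence of the two columns representing $P_i$ and $P_j$ is equivalent to these vectors not being proportional, i.e. to $P_i\neq P_j$ in $\PG(2,\F)$; quantifying over all pairs $i<j$ gives exactly condition (1). For $Z_i=\{2i-1,2i,2i+1\}$, linear independence of the three columns representing $P_i,Q_i,P_{i+1}$ is equivalent, given that these three vectors are nonzero, to noncollinearity of the corresponding projective points.

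To finish, I would show that conditions (1) and (2) together are equivalent to asking that the three columns of $Z_i$ are independent for every $i\in[n-1]$, while also taking care of proper sub-dependencies. Under (1), $P_i\neq P_{i+1}$, so $\langle P_i,P_{i+1}\rangle$ is a well-defined projective line, and noncollinearity of the triple $P_i,Q_i,P_{i+1}$ becomes exactly $Q_i\notin\langle P_i,P_{i+1}\rangle$, i.e. condition (2). Conversely, (2) automatically forces $Q_i\neq P_i$ and $Q_i\neq P_{i+1}$, so it absorbs the size-$2$ dependencies among columns indexed by subsets of $Z_i$ that involve $Q_i$, while the size-$2$ dependence among $\{P_i,P_{i+1}\}$ is excluded by (1), and size-$1$ dependencies are excluded by the min-distance hypothesis.

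I do not anticipate a real obstacle in carrying this out; the only mild subtlety is the bookkeeping for proper nonempty subsets of $Z_i$, namely ruling out support $\subseteq\{2i-1\},\{2i\},\{2i+1\}$ (handled by $\dd_{\HH}(\mathcal D)\geq 2$), $\{2i-1,2i+1\}$ (handled by (1) applied to $j=i+1$), and $\{2i-1,2i\}$ or $\{2i,2i+1\}$ (handled by (2), which forces $Q_i$ to be distinct from both $P_i$ and $P_{i+1}$). Collecting these three elementary linear-algebraic equivalences then yields the stated characterization.
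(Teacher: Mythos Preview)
Your proposal is correct and follows essentially the same approach as the paper: both reduce the $\mathcal N$-avoiding condition to linear (in)dependence of the columns of $H$ indexed by each $I\in\mathcal N$, handle $Y_i$ via the hypothesis $\dd_{\HH}(\mathcal D)\geq 2$, and translate the $X_{i,j}$ and $Z_i$ cases into the projective conditions (1) and (2). Your extra bookkeeping on proper subsets of $Z_i$ is harmless but unnecessary, since linear independence of the three columns indexed by $Z_i$ already rules out any nonzero codeword supported on a subset of $Z_i$.
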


\begin{proof}
    The supports of the codewords of $\mathcal D$ can be easily seen via the linear dependence of the columns of $H$, or, in other words, via the collinearity relations of the associated projective points $P_1,Q_1,\ldots Q_{n-1},P_n$. From this point of view, we have that:
    \begin{enumerate}
    \item There exists a nonzero codeword of $\mathcal D$ whose support is contained in $X_{i,j}$ if and only if the $(2i-1)$-th and the $(2j-1)$-th columns of $H$ are linearly dependent, that is, if and only if  $P_i=P_j$.
    \item There exists a nonzero codeword of $\mathcal D$ whose support is contained in $Z_{i}$ if and only if the  $(2i-1)$-th, the  $(2i)$-th and  $(2i+1)$-th columns of $H$ are linearly dependent, that is, if and only if $P_i,Q_i$ and $P_{i+1}$ are collinear. Since $P_i \neq P_{i+1}$, tihs is equivalent to say that $Q_i\in \langle P_i,P_j\rangle$.
    \end{enumerate}
    Finally, the supports of codewords in $\mathcal D$ cannot be contained in any $Y_i$, since we are assuming $\mathcal D$ to have minimum distance at least $2$.
\end{proof}

We can now illustrate our construction for MFRD codes with $\delta=3$.

\begin{construction}\label{constr:d=3}
    Let $n\leq |\F|^2+|\F|+1$, and take $n$ distinct points $\{P_1=[u_1],\ldots, P_n=[u_n]\}\subseteq \PG(2,\F)$. Furthermore, for each $i \in [n-1]$, choose $Q_i=[v_i] \notin \langle P_i,P_{i+1}\rangle$. Construct $H\in \Mat(3,2n-1,\F)$ whose $(2i-1)$-th column is $u_i$ and whose $(2i)$-th column is $v_i$: 
    $$H=(\,u_1\, \mid \, v_1 \, \mid \, u_2 \, \mid  \,\cdots\, \mid \,v_{n-1} \, \mid \, u_n ).$$
    Let $\D$ be the $\mathcal N$-avoiding $[2n-1,2n-4,3]_{\F}$ code whose parity-check matrix is $H$. Define the $\left\{n,\frac{n(n+1)}{2}-3,3\right\}_\F$ code
    $$\C:=\varrho^{-1}(\D) \oplus \overline{\U}(n-2,\F),$$
    where $\overline{\U}(n-2,\F)=\phi_{1,3}({\U}(n-2,\F))$.
    \end{construction}

As a direct consequence of Proposition \ref{prop:Navoiding}, combined with
Proposition \ref{prop:Hamming_supports} and Theorem \ref{thm:diagonal_projection}, we get also necessary and sufficient conditions for the existence of MFRD codes with minimum flag-rank distance $\delta=3$.
\begin{theorem}\label{thm:delta=3}
 The code $\C$ given in Construction \ref{constr:d=3} is an $\ntt{n,f(n,3),3}$ MFRD code. Furthermore,
    an $\ntt{n,f(n,3),3}$ MFRD code exists if and only if $n \leq |\PG(2,\F)|=|\F|^2+|\F|+1$.
\end{theorem}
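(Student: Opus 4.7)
The plan is to combine Theorem \ref{thm:diagonal_projection}, Proposition \ref{prop:Hamming_supports}, and Proposition \ref{prop:Navoiding} into a chain of equivalences relating four objects: an $\ntt{n,f(n,3),3}$ MFRD code $\C$, a codimension-$3$ subspace $\varrho^{-1}(\D)\subseteq\mathrm{D}(2,n,\F)$ with flag-rank distance at least $3$, an $\mathcal N$-avoiding $[2n-1,2n-4]_{\F}$ code $\D$, and a configuration of points $(P_1,Q_1,P_2,\ldots,Q_{n-1},P_n)$ in $\PG(2,\F)$ satisfying the two conditions of Proposition \ref{prop:Navoiding}. Since all the technical work has already been invested into the three cited results, the proof is essentially an assembly of these ``if and only if'' statements together with an elementary counting argument over $\PG(2,\F)$.

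For the constructive direction, I would first check that $g(3)=2+\lfloor\sqrt{2}\rfloor\cdot\lfloor(\sqrt{9}-1)/2\rfloor=3$, so $f(n,3)=\tfrac{n(n+1)}{2}-3$, and verify the dimension bookkeeping: since $\dim\varrho^{-1}(\D)=2n-4$ and $\dim\overline{\U}(n-2,\F)=\tfrac{(n-2)(n-1)}{2}$, their sum is
\[
(2n-4)+\frac{(n-2)(n-1)}{2}=\frac{n^2+n-6}{2}=f(n,3).
\]
The two conditions placed on the columns of $H$ in Construction \ref{constr:d=3} are exactly those of Proposition \ref{prop:Navoiding}, so $\D$ is $\mathcal N$-avoiding; Proposition \ref{prop:Hamming_supports} then promotes this to the statement that $\varrho^{-1}(\D)$ has minimum flag-rank distance at least $3$; finally Theorem \ref{thm:diagonal_projection}(3) packages the direct sum with $\overline{\U}(n-2,\F)$ into an MFRD code with the right parameters.

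For the characterization, necessity of $n\leq|\F|^2+|\F|+1$ is immediate: given any $\ntt{n,f(n,3),3}$ MFRD code, Theorem \ref{thm:diagonal_projection}(1)$\Rightarrow$(2) yields an $\mathcal N$-avoiding $[2n-1,2n-4]_\F$ code, and by Proposition \ref{prop:Navoiding}(1) the odd-indexed columns of its parity-check matrix represent $n$ pairwise distinct points of $\PG(2,\F)$. Conversely, if $n\leq|\F|^2+|\F|+1$, one can freely pick $n$ distinct points $P_1,\ldots,P_n\in\PG(2,\F)$, and for each $i\in[n-1]$ the line $\langle P_i,P_{i+1}\rangle$ contains only $|\F|+1$ points, leaving at least $|\F|^2\geq 1$ admissible choices for $Q_i$. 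Thus Construction \ref{constr:d=3} can always be carried out, producing the desired MFRD code.

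The only potentially delicate point is keeping the encoding dictionary straight (upper triangular matrix $\leftrightarrow$ vector in $\F^{2n-1}$ under $\varrho$ $\leftrightarrow$ columns of a parity-check matrix $\leftrightarrow$ points in $\PG(2,\F)$) and matching the order in which the ``$P_i$-columns'' and ``$Q_i$-columns'' appear in $H$; once this is fixed and the dimension identity is recorded, the theorem follows as a direct corollary of the three prior results.
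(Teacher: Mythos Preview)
Your proposal is correct and follows precisely the approach indicated in the paper, which simply states that the theorem is a direct consequence of Proposition \ref{prop:Navoiding} combined with Proposition \ref{prop:Hamming_supports} and Theorem \ref{thm:diagonal_projection}. You have spelled out in detail the chain of equivalences that the paper leaves implicit, including the dimension check and the counting argument in $\PG(2,\F)$ needed for both directions of the existence characterization.
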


\begin{example}
 Let $n=4$ and $\F=\F_2$. Consider the $[7,4,3]_{\F_2}$ Hamming code whose parity-check matrix is
 $$H=\begin{pmatrix}
     1 & 0 & 1 & 0 & 1 & 0 & 1 \\
     0 & 1 & 1 & 1 & 0 & 0 & 1 \\
     0 & 1 & 0 & 0 & 1 & 1 & 1     
 \end{pmatrix}\in\Mat(3,7,\F_2).$$
Using  Proposition~\ref{prop:Navoiding} it is immediate to verify that $\D$ is $\mathcal{N}$-avoiding. A basis for $\D$ is given by
$$w_1=(1,0,1,1,0,0,0), \; w_2=(1,0,0,0,1,1,0), \; w_3=(0,1,0,1,0,1,0), \; w_4=(1,0,0,1,0,1,1).$$
Thus, the code $\C=\varrho^{-1}(\D)\oplus \overline{\U}(2,\F)$ is generated by the matrices

$$ \begin{pmatrix}
    1 & 0 & 0 & 0 \\
    0 & 1 & 1 & 0 \\
    0 & 0 & 0 & 0 \\
    0 & 0 & 0 & 0 
\end{pmatrix}, \quad \begin{pmatrix}
     1 & 0 & 0 & 0 \\
    0 & 0 & 0 & 0 \\
    0 & 0 & 1 & 1  \\
    0 & 0 & 0 & 0 
\end{pmatrix}, \quad \begin{pmatrix}
    0 & 1 & 0 & 0 \\
    0 & 0 & 1 & 0 \\
    0 & 0 & 0 & 1 \\
    0 & 0 & 0 & 0 
\end{pmatrix}, \quad \begin{pmatrix}
    1 & 0 & 0 & 0 \\
    0 & 0 & 1 & 0 \\
    0 & 0 & 0 & 1 \\
    0 & 0 & 0 & 1    
\end{pmatrix},$$

$$ \begin{pmatrix}
    0 & 0 & 1 & 0 \\
    0 & 0 & 0 & 0 \\
    0 & 0 & 0 & 0 \\
    0 & 0 & 0 & 0 
\end{pmatrix}, \quad \begin{pmatrix}
    0 & 0 & 0 & 0 \\
    0 & 0 & 0 & 1 \\
    0 & 0 & 0 & 0 \\
    0 & 0 & 0 & 0    
\end{pmatrix}, \quad \begin{pmatrix}
    0 & 0 & 0 & 1 \\
    0 & 0 & 0 & 0 \\
    0 & 0 & 0 & 0 \\
    0 & 0 & 0 & 0    
\end{pmatrix}.$$
\end{example}

%%%%%%%%%%%%%%%%%%%%%%%%%%%%%%%%%%%%%%%%%%%%%%%%%%%%%%%%%%%%%%%%%%%%%%%%%%%%%%%%%%%%%%%%%%%%%%%%%%%%
\subsubsection{MFRD codes with $\delta=4$}

We first start by providing a necessary condition on the field size for which there exists an MFRD code with $\delta=4$, by using Proposition \ref{prop:flag_to_Hamming}. After that, we derive a more refined condition in this special case compared to the one given in Corollary \ref{cor:nonexistence}.
Note that in this case we have that the codimension of an $\ntt{n,f(n,4),4}$ MFRD code is
$$g(4)=3+ \lfloor{\sqrt{3}}\rfloor \cdot \left\lfloor{\frac{\sqrt{11}-1}{2}}\right\rfloor=4.$$ 

\begin{theorem}\label{thm:d=4en<=}
Let $\C$ be an $\ntt{n,f(n,4),4}$ code. Then there exists a cap in $\PG(3,\F)$ of cardinality $n$, that is a set of points no three of which are collinear. In particular, 
$$ n\leq \begin{cases} 8 & \mbox{ if } |\F|=2, \\ 
|\F|^2+1 & \mbox{ otherwise.  }  
\end{cases}$$
\end{theorem}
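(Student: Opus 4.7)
The plan is to follow the same strategy used for the $\delta=3$ case: pass from the MFRD code $\C$ to its intersection with $\mathrm{D}(1,n,\F)$, reinterpret this intersection as a Hamming-metric code via Proposition \ref{prop:flag_to_Hamming}, and then read a projective configuration off of a parity-check matrix.

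First, I would note that $g(4) = 3 + \lfloor \sqrt{3} \rfloor \cdot \lfloor (\sqrt{11}-1)/2 \rfloor = 3 + 1\cdot 1 = 4$, so by Proposition \ref{prop:flag_to_Hamming} the code $\overline{\C} := \C \cap \mathrm{D}(1,n,\F)$ is an $[n,s,\geq 4]_{\F}$ Hamming-metric code with $s \geq n-4$. I would then pick a subcode $\overline{\C}' \subseteq \overline{\C}$ of dimension exactly $n-4$ (any subcode inherits the distance lower bound, so the minimum distance of $\overline{\C}'$ is still at least $4$) and take $H \in \Mat(4,n,\F)$ to be a parity-check matrix of $\overline{\C}'$.

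Second, I would translate the distance hypothesis into a geometric statement about the columns of $H$. Since $\overline{\C}'$ has minimum distance at least $4$, every three columns of $H$ are linearly independent; in particular no column is zero and no two columns are proportional. Interpreting the columns of $H$ as projective points yields $n$ distinct points in $\PG(3,\F)$, no three of which are collinear---this is exactly a cap of cardinality $n$ in $\PG(3,\F)$, proving the first claim.

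Finally, I would invoke the classical bounds on the maximum size of a cap in $\PG(3,\F)$. For $|\F| \geq 3$ the bound is $|\F|^2+1$ (due to Bose in odd characteristic and Qvist--Barlotti for $|\F|$ even with $|\F| \geq 4$, and trivially true for infinite $\F$), while for $|\F|=2$ the sharper bound $n \leq 8$ follows from a short direct argument in $\PG(3,2)$, which has only $15$ points in total. The main obstacle is essentially the bookkeeping around the codimension of $\overline{\C}$ and the extraction of a $4 \times n$ parity-check matrix; once the Hamming-to-projective dictionary is set up, the reduction and the final appeal to the cap bound are routine.
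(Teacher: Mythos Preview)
Your proof is correct and follows essentially the same approach as the paper: intersect with $\mathrm{D}(1,n,\F)$, apply Proposition~\ref{prop:flag_to_Hamming} to obtain a Hamming-metric code of codimension at most $4$ and minimum distance at least $4$, read off a cap from the columns of a parity-check matrix, and invoke the classical Bose--Qvist bounds. Your device of passing to a subcode of dimension exactly $n-4$ (so that the parity-check matrix is $4\times n$ and the points sit in $\PG(3,\F)$) neatly sidesteps the codimension-$3$ case that the paper handles separately via arcs in $\PG(2,\F)$.
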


\begin{proof}
 By Proposition \ref{prop:flag_to_Hamming}, the code $\C\cap \mathrm{D}(1,n,\F)$ is an $[n,\ge n-4,\ge 4]_{\F}$ code. In particular, either it is an MDS code of codimension $3$ or $4$, or it is an $[n,n-4,4]_{\F}$ almost MDS code. In both cases, by taking the columns of a parity-check matrix, the corresponding projective points in $\PG(3,\F)$ (or $\PG(2,\F)$) need to be such that no three of them are collinear, i.e. they form a cap. It is well-known that caps in $\PG(2,\F)$ are arcs, and their size can be at most $|\F|+1$ (or $|\F|+2$ if $|\F|=2^h$). If we are instead in $\PG(3,\F)$,  the largest size that a  cap can have was proved in \cite{bose1947mathematical,quist1952some} to be 
 $$\begin{cases} 8 & \mbox{ if } |\F|=2, \\ 
|\F|^2+1 & \mbox{ otherwise.  }  
\end{cases}$$
\end{proof}

We now present a construction of MFRD code with $\delta=4$.

\begin{construction}\label{constr:C}
Let $n\geq 4$.
Let $\mathcal{C}_1,\ldots,\mathcal{C}_{n-2}$ be $[2\times 2,2,2]_{\mathbb{F}}$ MRD codes  and  let $\mathcal{C}_{n-1}$ be an $[n,n-3,4]_\F$ MDS code.
For any $i \in [n-2]$, denote by $\overline{\mathcal{C}}_i=\phi_{i,i+1}(\mathcal{C}_i)$ and denote by $\overline{\mathcal{C}}_{n-1}$ the 
the image of $\C_{n-1}$ under the natural mapping $\psi:\F^n \rightarrow \mathrm{D}(1,n,\F)$. 
Moreover, let $\overline{\U}(n-3,\F)=\phi_{1,4}(\U(n-3,\F))$.
Define the $\ntt{n,t}$ code
\[ \mathcal{C}=\overline{\mathcal{C}}_1+\ldots+\overline{\mathcal{C}}_{n-2}+\overline{\mathcal{C}}_{n-1}+\overline{\U}(n-3,\F).  \]
\end{construction}

In Figure \ref{pic} we can see the structure of a codeword of Construction \ref{constr:C} when $n=5$, in which in the red, green and blue squares there are the codewords of $\mathcal{C}_1$, $\mathcal{C}_2$ and $\mathcal{C}_3$, respectively, and on the main diagonal there are the codewords of $\mathcal{C}_4$.

\begin{figure}[ht!]
\centering
\begin{tikzpicture}[thick, scale=0.6]
\draw[help lines, thick] (0,0) -- (1,0)-- (1,-1) -- (2,-1) -- (2,-2)-- (3,-2) -- (3,-3)-- (4,-3) -- (4,-4)--(5,-4)--(5,1) -- (0,1)--(0,0);

\draw[gray, thick, fill=yellow!10] (0,1) -| (0,0) -| (1,0) -| (1,1) -| cycle;
%\draw[fill=blue!10] (0,1) -| (0,0) -| (1,0) -| (1,1) -| cycle;
\draw[gray, thick, fill=yellow!10] (1,0) -| (1,-1) -| (2,-1) -| (2,0) -| cycle;
\draw[gray, thick, fill=yellow!10] (2,-1) -| (2,-2) -| (3,-2) -| (3,-1) -| cycle;
\draw[gray, thick, fill=yellow!10] (3,-2) -| (3,-3) -| (4,-3) -| (4,-2) -| cycle;
\draw[gray, thick, fill=yellow!10] (4,-3) -| (4,-4) -| (5,-4) -| (5,-3) -| cycle;

\draw[red, thick] (1,1)-|(1,-1)-|(3,-1)-| (3,1) -|cycle;
\draw[ForestGreen, thick] (2,0)-|(2,-2)-|(4,-2)-| (4,0) -|cycle;
\draw[blue!70, thick] (3,-1)-|(3,-3)-|(5,-3)-| (5,-1) -|cycle;
\end{tikzpicture}
\caption{Codewords from Construction \ref{constr:C} for $n=5$. In the boxes with the colored background we embed the codewords from the MDS code $\C_{4}$. In the boxes with the colored edges we sum codewords from $\C_1,\C_2,\C_3$, respectively.}
\label{pic}
\end{figure}
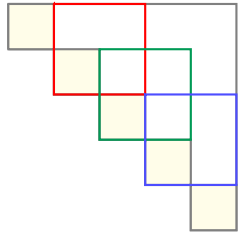

\begin{theorem}\label{th:constrd4}
Let $n\in\mathbb N$ with $n\geq 4$. Let $\mathcal{C}_1,\ldots,\mathcal{C}_{n-2}$ be $[2\times 2,2,2]_{\mathbb{F}}$ MRD codes  and  let $\mathcal{C}_{n-1}$ be an $[n,n-3,4]_\F$ MDS code. The code  
\[\mathcal{C}=\overline{\mathcal{C}}_1+\ldots+\overline{\mathcal{C}}_{n-2}+\overline{\mathcal{C}}_{n-1}+\overline{\U}(n-3,\F) \]
defined in Construction \ref{constr:C} is an $\ntt{n,f(n,4),4}$ MFRD code.
\end{theorem}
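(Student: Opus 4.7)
The plan is to apply Theorem~\ref{thm:diagonal_projection}. First I would observe that $\overline{\mathcal{C}}_{n-1}\subseteq \mathrm{D}(1,n,\F)$ and that each $\overline{\mathcal{C}}_i=\phi_{i,i+1}(\mathcal{C}_i)$ for $i\in[n-2]$ occupies the positions $(i,i+1),(i,i+2),(i+1,i+1),(i+1,i+2)$, all lying on the first three diagonals. Hence $\mathcal{D}:=\overline{\mathcal{C}}_1+\cdots+\overline{\mathcal{C}}_{n-2}+\overline{\mathcal{C}}_{n-1}$ is contained in $\mathrm{D}(3,n,\F)$ and $\mathcal{C}=\mathcal{D}\oplus\overline{\U}(n-3,\F)$, so Theorem~\ref{thm:diagonal_projection} reduces the proof to verifying that $\mathcal{D}$ has codimension $g(4)=4$ in $\mathrm{D}(3,n,\F)$ and that every nonzero $M\in\mathcal{D}$ satisfies $\wf(M)\geq 4$.

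For the dimension, I would consider the linear map
\[
\Phi\colon \mathcal{C}_1\times\cdots\times\mathcal{C}_{n-2}\times\mathcal{C}_{n-1}\longrightarrow \mathrm{D}(3,n,\F),\qquad (N_1,\ldots,N_{n-2},\gamma)\longmapsto\sum_{i=1}^{n-2}\overline{N}_i+\overline{\gamma},
\]
whose image is $\mathcal{D}$ and whose domain has dimension $2(n-2)+(n-3)=3n-7=\dim \mathrm{D}(3,n,\F)-4$. It suffices to prove $\Phi$ is injective. If $(N_1,\ldots,N_{n-2},\gamma)\in\ker\Phi$, then looking at the second superdiagonal of the zero matrix gives $(N_i)_{12}=0$ for every $i$, and looking at the $(1,2)$-entry gives $(N_1)_{11}=0$; the top row of $N_1$ vanishes and the MRD property of $\mathcal{C}_1$ forces $N_1=0$. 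Iterating with $(N_i)_{11}=-(N_{i-1})_{22}$ from the $(i,i+1)$-entry yields $N_i=0$ for all $i$, after which the main-diagonal equations give $\gamma_j=M_{jj}=0$ and so $\gamma=0$.

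For the minimum distance, I would write the three nontrivial diagonals of a nonzero $M\in\mathcal{D}$ as $a_j=M_{j,j}$, $b_j=M_{j,j+1}$, $c_j=M_{j,j+2}$ and set $T_c:=\{j:c_j\neq 0\}$. A direct inspection shows that $\MM{M}{j}$ has a staircase shape whose antidiagonal is $(c_{j-2},c_{j-1},c_j)$, so $\rk(\MM{M}{j})\geq\#(\{j-2,j-1,j\}\cap T_c)$ and summing yields $\wf(M)\geq 3|T_c|$. The case $|T_c|\geq 2$ is then immediate. If $|T_c|=0$ and all $N_i=0$, then $M=\overline{\gamma}$ with $\wf(M)=\wt_{\HH}(\gamma)\geq 4$ by the MDS hypothesis; if instead some $N_i\neq 0$, the constraint $(N_i)_{12}=0$ together with MRD forces $(N_i)_{11}(N_i)_{22}\neq 0$ for such $i$, and taking the extreme indices $i_{\min}\leq i_{\max}$ shows that $b_{i_{\min}}$ and $b_{i_{\max}+1}$ are nonzero; a short rank computation on the corresponding submatrices (where the staircase has collapsed to its first superdiagonal) gives $\wf(M)\geq 4$.

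The delicate case, which I expect to be the main obstacle, is $|T_c|=1$: the bound $\wf(M)\geq 3|T_c|=3$ must be tightened by one. I would proceed by contradiction, assuming $\wf(M)=3$ with $T_c=\{j^*\}$, so that $\MM{M}{j}=0$ for every $j\notin\{j^*,j^*+1,j^*+2\}$. Iteratively applying the identities $b_j=(N_{j-1})_{22}+(N_j)_{11}$ with the MRD property propagates the vanishing from both ends and forces $N_i=0$ for all $i\neq j^*$; the relations $a_j=\gamma_j+(N_{j-1})_{21}$ then confine $\supp(\gamma)\subseteq\{j^*,j^*+1,j^*+2\}$, so the MDS hypothesis $\wt_{\HH}(\gamma)\geq 4$ forces $\gamma=0$. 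Substituting back, the rank-one condition on $\MM{M}{j^*+1}$ reduces exactly to $\rk(N_{j^*})\leq 1$, contradicting that $\mathcal{C}_{j^*}$ is MRD. The boundary situations $j^*\in\{1,n-2\}$ require only cosmetic modifications of this propagation, and combining all cases with Theorem~\ref{th:Singleton_codimension} shows that $\mathcal{C}$ is an $\ntt{n,f(n,4),4}$ MFRD code.
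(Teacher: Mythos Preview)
Your argument is correct. The reduction via Theorem~\ref{thm:diagonal_projection} and the injectivity argument for the dimension are essentially identical to the paper's (the paper picks the minimal index with a nonzero coefficient and reads off the first row of the corresponding $2\times2$ matrix, exactly as you do).

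Where you diverge is in the minimum-distance step. The paper partitions according to the set $\mathcal{J}=\{i\in[n-2]\,:\,N_i\neq 0\}$ rather than your $T_c=\{j\,:\,(N_j)_{12}\neq 0\}$. When $|\mathcal{J}|\ge 2$, the first row of $N_{\min\mathcal{J}}$ and the second column of $N_{\max\mathcal{J}}$ are nonzero (by the MRD property), and this immediately exhibits four distinct indices $j$ with $\MM{M}{j}\neq 0$; when $\mathcal{J}=\{i\}$ and $\gamma\neq 0$, the MDS hypothesis yields at least three nonzero diagonal entries at positions different from $i+1$, and the nonzero first row of $N_i$ provides a fourth witness at $j=i+1$; the case $\mathcal{J}=\emptyset$ is just $\wf(M)=\wt_{\HH}(\gamma)\ge 4$. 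Each case takes only a couple of lines.

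Your staircase inequality $\wf(M)\ge 3|T_c|$ is a pleasant structural observation and disposes of $|T_c|\ge 2$ instantly, but the price is the $|T_c|=1$ case, where you have to run a two-sided propagation to kill all $N_i$ with $i\neq j^*$, then invoke the MDS property to kill $\gamma$, and finally read off $\rk(N_{j^*})$. This is correct but noticeably longer than the paper's route; the paper's choice of case variable ($\mathcal{J}$ instead of $T_c$) makes the extremal-index trick available in the generic case and pushes all the MDS work into a single short subcase.
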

\begin{proof}

First of all we prove that $\dim_{\F}(\C)=\frac{n(n+1)}{2}-4$. Since $\overline{\mathcal{C}}_1+\ldots+\overline{\mathcal{C}}_{n-2}+\overline{\mathcal{C}}_{n-1}\subseteq \mathrm{D}(3,n,\F)$ then $(\overline{\mathcal{C}}_1+\ldots+\overline{\mathcal{C}}_{n-2}+\overline{\mathcal{C}}_{n-1})\cap \overline{\U}(n-3,\F)=\{0\}$. Therefore, we only need to show that $\dim_{\F}(\overline{\mathcal{C}}_1+\ldots+\overline{\mathcal{C}}_{n-2}+\overline{\mathcal{C}}_{n-1})=2(n-2)+n-3$. 
Since $\dim_\F(\overline{\C}_1)+\ldots+\dim_\F(\overline{\C}_{n-1})=\dim_\F(\C_1)+\ldots+\dim_\F(\C_{n-1})=2(n-2)+n-3$, it is enough to show that $\overline{A}^{(1)},\ldots,\overline{A}^{(n-1)}$ are $\F$-linearly independent, for every choice of $\overline{A}^{(i)} \in {\overline{\mathcal{C}}}_i\setminus\{0\}$ for  $i\in[n-1]$. 
Let $A^{(i)} \in {\mathcal{C}}_i$ be the element corresponding to $\overline{A}^{(i)} \in {\overline{\mathcal{C}}}_i$, for every $i$.
Let $\lambda_1,\ldots,\lambda_{n-1} \in \F$ be such that
\begin{equation}\label{eq:B=0} 
B=\lambda_1 \overline{A}^{(1)}+\ldots+\lambda_{n-1}\overline{A}^{(n-1)}=0. 
\end{equation}
Let $\mathcal H:=\{i \in [n-2] \st \lambda_i \neq 0\}$. Towards a contradiction, assume that $\mathcal H\neq \emptyset$ and let $\kappa:=\min(\mathcal H)$. Thus, by \eqref{eq:B=0} we have that 
$$(0,0)=(B_{\kappa,\kappa+1},B_{\kappa,\kappa+2})=\lambda_\kappa (A^{(\kappa)}_{1,1},A^{(\kappa)}_{1,2}). $$
However, $A^{(\kappa)}\in \C_\kappa\setminus\{0\}$ is a $2\times 2$ matrix of rank $2$ and its first row cannot be $0$. Thus, $\lambda_{\kappa}=0$, yielding a contradiction. 
This means $\mathcal H=\emptyset$ and $B=\lambda_{n-1}\overline{A}^{(n-1)}$. This means that
$$\lambda_1=\ldots=\lambda_{n-1}=0,$$
showing that $\overline{A}^{(1)},\ldots,\overline{A}^{(n-1)}$ are $\F$-linearly independent and 
\[ \dim_{\F}(\mathcal{C})=2(n-2)+n-3+\dim_{\F}(\U(n-3,\F))=\frac{n(n+1)}2-4. \]

\noindent We now show that $\df(\C)\geq4$.  Any element  $C\in\mathcal{C}\setminus \{0\}$ can be uniquely written as 
\[ C=\overline{A}^{(1)}+\ldots+\overline{A}^{(n-1)}+\overline{A}^{(n)}, \]
where ${A}^{(i)} \in {\mathcal{C}}_i$ for any $i \in [n-1]$ and $\overline{A}^{(n)} \in \overline{U}(n-3,\F)$.
Our aim is to first prove that $\w_{\ff}(C)\geq 4$. By Theorem \ref{thm:diagonal_projection} we may assume that $\overline{A}^{(n)}=0$, which implies that the nonzero entries of $C$ are contained in the first three main diagonals. 
Let $\mathcal J:=\{ \ell  \in [n-2] \st \overline{A}^{(\ell)} \neq 0 \}$.
We divide the discussion in three cases.\\
\textbf{Case 1:} $|\mathcal{J}|\geq 2$. \\
Let $i=\min(\mathcal{J})$ and $j=\max(\mathcal{J})$.
In this case, the $i$-th row and the $(j+2)$-th column of $C$ are nonzero, therefore $\mathrm{rk}(C_{[i]})$, $\mathrm{rk}(C_{[i+1]})$, $\mathrm{rk}(C_{[j+1]})$ and $\mathrm{rk}(C_{[j+2]})$ are at least one and so $\w_{\ff}(C)\geq \mathrm{rk}(C_{[i]})+ \mathrm{rk}(C_{[i+1]})+\mathrm{rk}(C_{[j+1]})+\mathrm{rk}(C_{[j+2]})\geq 4$.\\
\textbf{Case 2:}  $\mathcal{J}=\{i\}$.\\
If $\overline{A}^{(n-1)}=0$ then $\w_{\ff}(C)=\w_{\ff}(\overline{A}^{(i)})=4$. Otherwise, on the main diagonal of $C$ there are at least three nonzero elements in the entries $(i_1,i_1)$, $(i_2,i_2)$ and $(i_3,i_3)$ for $i_1,i_2,i_3 \neq i+1$. Moreover, the first nonzero row of $\overline{A}^{(i)}$ implies that $\mathrm{rk}(C_{[i+1]})$ is at least one. 
Therefore, $\w_{\ff}(C)\geq \mathrm{rk}(C_{[i_1]})+ \mathrm{rk}(C_{[i_2]})+\mathrm{rk}(C_{[i_3]})+\mathrm{rk}(C_{[i+1]})\geq 4$.\\ 
\textbf{Case 3:}  $\mathcal{J}=\emptyset$.\\
In this case $C=\overline{A}^{(n-1)}$ and hence, since the minimum Hamming distance of $\mathcal{C}_{n-1}$ is $4$, then there are at least four nonzero entries on the main diagonal of $C$, implying $\w_{\ff}(C)\geq 4$.\\
Finally, observe that $\dim_{\F}(\mathcal{C})=\frac{n(n+1)}2-4$ reaches the equality in the Singleton-like bound of Proposition \ref{pr:Singleton} when $\delta=4$ and therefore the minimum flag-rank distance is $4$.
\end{proof}

\begin{remark}
Construction \ref{constr:C} works also for $n=3$, by taking as MDS code $\C_{n-1}=\{0\}$. In this case, it coincides with Construction \ref{constr:A}.
\end{remark}

In Example \ref{ex:constC} we illustrate how to use Construction \ref{constr:C} to obtain an MFRD code.

\begin{example}\label{ex:constC}
    Let $n=4$. We consider the $[2\times 2,2,2]_{\F_2}$ MRD codes $\C_1=\C_2$ generated by
    \begin{align*}
        A_1=\begin{pmatrix}
            1 & 0 \\
            0 & 1
        \end{pmatrix}, \;\; A_2=\begin{pmatrix}
            0 & 1 \\
            1 & 1
        \end{pmatrix}. 
    \end{align*}
    Let $C_3$ be the $[4,1,4]_{\F_2}$ MDS code generated by $G=\begin{pmatrix}
        1 & 1 & 1 & 1
    \end{pmatrix}$. 
    Finally we have that $\overline{\U}(1,\F_2)$ is generated by
    $$A:=\begin{pmatrix}
        0 & 0 & 0 & 1 \\
        0 & 0 & 0 & 0 \\
        0 & 0 & 0 & 0\\
        0 & 0 & 0 & 0
    \end{pmatrix}.$$
    Consider then the codes $\overline{\C}_1=\phi_{1,2}(\C_1)$ and  $\overline{\C}_2=\phi_{2,3}(\C_2)$ generated respectively by 
   \begin{align*}
     \phi_{1,2}(A_1)=\begin{pmatrix}
         0 & 1 & 0 & 0 \\
         0 & 0 & 1 & 0 \\
         0 & 0 & 0 & 0 \\
         0 & 0 & 0 & 0
     \end{pmatrix}, \; \phi_{1,2}(A_2)=\begin{pmatrix}
         0 & 0 & 1 & 0 \\
         0 & 1 & 1 & 0 \\
         0 & 0 & 0 & 0 \\
         0 & 0 & 0 & 0
     \end{pmatrix}, \; 
    \end{align*}
    and 
    \begin{align*}
     \phi_{2,3}(A_1)=\begin{pmatrix}
         0 & 0 & 0 & 0 \\
         0 & 0 & 1 & 0 \\
         0 & 0 & 0 & 1 \\
         0 & 0 & 0 & 0
     \end{pmatrix}, \; \phi_{2,3}(A_2)=\begin{pmatrix}
         0 & 0 & 0 & 0 \\
         0 & 0 & 0 & 1 \\
         0 & 0 & 1 & 1 \\
         0 & 0 & 0 & 0
     \end{pmatrix}. \; 
    \end{align*}
    We let $\overline{\mathcal{C}}_3$ be the $\F_2$-subspace of diagonal matrices whose diagonal are the codewords of $C_3$, i.e. the code generated by
    $$\overline{G}=\begin{pmatrix}
        1 & 0 & 0 & 0\\
        0 & 1 & 0 & 0\\
        0 & 0 & 1 & 0 \\
        0 & 0 & 0 & 1
    \end{pmatrix}.$$
    By Theorem \ref{th:constrd4}, the space 
    $$\langle A,  \phi_{1,2}(A_1),  \phi_{1,2}(A_2), \phi_{2,3}(A_1), \phi_{2,3}(A_2),\overline{G}\rangle_{\F_2}\subseteq \U(4,\F_2)$$
    is a $\{4,6,4\}_{\F_2}$ MFRD code.
\end{example}

\begin{remark}\label{rem:fieldsize} 
The construction provided in Theorem \ref{th:constrd4} works whenever the following two conditions hold.
\begin{enumerate}
    \item There exists an $[n,n-3,4]_\F$ MDS code.
    \item There exists a $[2\times 2,2,2]_{\F}$ MRD code.
\end{enumerate}
For (1), we see that by duality this holds if and only if there exists an $[n,3,n-2]_\F$ MDS code. This is a special case of the MDS conjecture, which in dimension $3$  was shown to be true. In particular, such a code exists if and only if 
$$|\F| \geq \begin{cases} n-2 & \mbox{ if } n= 2^h+2, \\
n-1 & \mbox{ otherwise. }
\end{cases} $$

On the other hand, a $[2\times 2,2,2]_\F$ MRD code exists if and only if there exists a degree $2$ field extension of $\F$. Indeed, by assuming, without loss of generality, that the $2$-dimensional rank-metric code is generated by $\mathrm{Id}_2$ and $A$, then it is MRD if and only if $A$ has no eigenvalues in $\F$, that is, if and only if its characteristic polynomial is irreducible over $\F$.
\end{remark}

\subsection{Duality of maximum flag-rank distance codes}
  For classical metric spaces used in coding theory, codes meeting the Singleton bounds with equality have the remarkable property of being closed under duality. More precisely, for codes endowed with the Hamming metric and codes endowed with the rank metric, one can consider the duality  given by the standard inner product (or by any isometric equivalent bilinear form), and study the dual spaces with respect to such a nondegenerate bilinear form. In this situation, the dual of a code meeting the Singleton bound with equality -- that is, an MDS code for the Hamming metric, and an MRD code for the rank metric -- meets again the Singleton bound with equality.

 However, surprisingly this is not the case in general for the flag-rank distance. Let $\varphi:\U(n,\F)\times \U(n,\F) \rightarrow \F$ be any nondegenerate $\F$-bilinear form, and consider
  the \textbf{dual code} 
 $$\C^{\perp_\varphi}:=\left\{ N \in \U(n,\F) \st \varphi (M,N)=0 \mbox{ for every } M \in \C\right\}.$$
 
 First of all, if we take any generic nondegenerate bilinear form $\varphi$, we can observe that the dual of an MFRD code has not always codimension of the form $g(\delta)$, for some $\delta\in \mathbb N$. Hence, the dual of an MFRD code cannot be MFRD according to our Definition \ref{def:MFRD}. Thus, in order to have some duality statement, we would need to allow a weaker definition of MFRD code. The most natural one is the following. For a given $t$, let 
 $$\delta(t):=\min\{\delta \st f(n,\delta)\ge  t \},$$
 and define a \textbf{quasi MFRD code} to be an $\ntt{n,t,\delta(t)}$ code. In other words, we can define a quasi MFRD code as the code that for the given dimension has the maximum possible minimum distance according to the Singleton bound of Proposition \ref{pr:Singleton}. With this in mind, we can ask ourselves if the dual of an MFRD codes is quasi MFRD.

However, the answer is again negative if we restrict to study the case of $\varphi$ being the standard inner product
$$\varphi(M,N):=\sum_{1\le i,j\le n}M_{i,j}N_{i,j},\qquad \mbox{ for any } M,N \in \U(n,\F).$$
In this case the dual code of an MFRD code is not necessarily MFRD, nor quasi MFRD. Indeed,  let $\C$ be an an optimum distance MFRD code from Construction \ref{constr:A}. Then, its dual code $\C^{\perp_\varphi}$ contains elements of flag-rank weight $1$. Thus, $\C^{\perp_\varphi}$ has codimension $n-k+1$ and minimum flag-rank distance $1$. Clearly, for minimum flag-rank distance $1$, the only MFRD code is the whole space $\U(n,\F)$. Furthermore,  for the given codimension $n-k+1$ we can construct flag-rank-metric codes with higher minimum flag-rank distance. Indeed, any codimension $n-k$ subcode of the code given in Section \ref{sec:d=2} has minimum flag-rank distance at least $2$. This shows that $\C^{\perp_\varphi}$ is not even quasi MFRD. A similar situation happens for any MFRD code obtained from Construction \ref{constr:B}. 
 The reader can also verify that the dual code $\C^{\perp_\varphi}$ when $\C$ is the $\left\{n,\frac{n(n+1)}{2}-1,2\right\}$ code given in Section \ref{sec:d=2} is a $\ntt{n,1,2}$ code, and hence very far from being MFRD.
 
 In conclusion, if some weaker form of a duality statement for MFRD codes may hold, then the defining bilinear form must be different from the standard inner product. In order to understand this, an investigation on the $\F$-linear isometries of $(\U(n,\F),\df)$ would be certainly beneficial. We leave this to future research.

\section{Conclusions}\label{sec:conclusion}
In this paper we pursued the study of linear flag-rank-metric codes initiated by Fourier and Nebe in \cite{fourier2021degenerate}. We started by extending the Singleton-like bound to any possible value of the minimum flag-rank distance, originally provided in \cite{fourier2021degenerate} only for the optimum-distance case.
We focused then on constructions of maximum flag-rank distance codes.  Table \ref{tab:existence} summarizes the constructions and existence results   obtained in Section \ref{sec:MFRC}.

\begin{center}
\begin{table}[htp]
\tabcolsep=1 mm
\renewcommand{\arraystretch}{2}
\begin{tabular}{|c|c|c|c|c|}
\hline
\hspace{0.2cm}$n$\hspace{0.2cm} & $\delta$ & \mbox{Conditions} & \mbox{References} \\ \hline
$2k-1+\epsilon$ &  $k(k+\epsilon)$ & $\begin{array}{ccc} \epsilon= \begin{cases} 1 & \text{if}\,\,n=2k\\ 0 & \text{if}\,\,n=2k-1 \end{cases}\\ 
\mbox{Existence of a } [(k+\epsilon)\times(k+\epsilon),k+\epsilon,k+\epsilon]_{\mathbb{F}} \mbox{ MRD code}
\end{array}$ & Th. \ref{th:extopt} \\ \hline
$2k-1$ &  $k^2-1$ & Existence of a $[(k+1)\times(k+1),k+1,k+1]_{\mathbb{F}}$ MRD code & Th. \ref{th:existnoddmrd} \\ \hline
 &  2 &  & Sec. \ref{sec:d=2}  \\ \hline
 &  3 & $n\leq |\mathbb{F}|^2+|\mathbb{F}|+1$ & Th. \ref{thm:delta=3}  \\ \hline
 &  4 & $\begin{array}{ccc} |\F| \geq \begin{cases} n-2 & \mbox{ if } n= 2^h+2 \\
n-1 & \mbox{ otherwise }
\end{cases}\\ \mbox{Existence of a } [2\times 2,2,2]_{\mathbb{F}} \mbox{ MRD code} \end{array}$ & Th. \ref{th:constrd4}  \\ \hline
\end{tabular}
\caption{Existence of a maximum $\{n,t,\delta\}_{\mathbb{F}}$ flag-rank distance codes.}
\label{tab:existence}
\end{table}
\end{center}

Our computational results indicate the existence of MFRD codes also for other values of $\delta$, which are not covered by our constructions. Example~\ref{ex:delta5} illustrates an MFRD code with~$\delta=5$.

\begin{example}\label{ex:delta5} 
Consider $n=5$ and $\delta=5$. The $\F_q$-subspace generated by the following nine matrices
\[ \small{\left( \begin{matrix} 
0 & 0 & 1 & 1 & 1\\
0 & 1 & 0 & 0 & 0\\
0 & 0 & 0 & 0 & 0\\
0 & 0 & 0 & 1 & 1\\
0 & 0 & 0 & 0 & 1 \end{matrix} \right),
\left(
\begin{matrix}
1 & 0 & 1 & 0 & 0\\
0 & 0 & 1 & 1 & 1\\
0 & 0 & 0 & 1 & 0\\
0 & 0 & 0 & 0 & 1\\
0 & 0 & 0 & 0 & 1
\end{matrix}
\right),
\left(
\begin{matrix}
0 & 1 & 0 & 0 & 0\\
0 & 1 & 1 & 1 & 0\\
0 & 0 & 1 & 1 & 1\\
0 & 0 & 0 & 1 & 1\\
0 & 0 & 0 & 0 & 1
\end{matrix}
\right),
\left(
\begin{matrix}
1 & 1 & 0 & 0 & 0\\
0 & 1 & 1 & 0 & 0\\
0 & 0 & 1 & 1 & 0\\
0 & 0 & 0 & 1 & 0\\
0 & 0 & 0 & 0 & 0
\end{matrix}
\right),}
\]
\[\small{
\left(
\begin{matrix}
1 & 0 & 1 & 0 & 1\\
0 & 0 & 0 & 1 & 0\\
0 & 0 & 0 & 0 & 0\\
0 & 0 & 0 & 1 & 0\\
0 & 0 & 0 & 0 & 1
\end{matrix}
\right),
\left(
\begin{matrix}
1 & 1 & 1 & 0 & 1\\
0 & 0 & 1 & 0 & 0\\
0 & 0 & 0 & 0 & 1\\
0 & 0 & 0 & 0 & 0\\
0 & 0 & 0 & 0 & 0
\end{matrix}
\right),
\left(
\begin{matrix}
1 & 0 & 1 & 1 & 1\\
0 & 0 & 1 & 0 & 1\\
0 & 0 & 0 & 0 & 0\\
0 & 0 & 0 & 0 & 1\\
0 & 0 & 0 & 0 & 1
\end{matrix}
\right),
\left(
\begin{matrix}
1 & 0 & 0 & 0 & 0\\
0 & 0 & 1 & 0 & 0\\
0 & 0 & 0 & 1 & 0\\
0 & 0 & 0 & 0 & 0\\
0 & 0 & 0 & 0 & 1
\end{matrix}
\right),
\begin{pmatrix}
1 & 1 & 0 & 0 & 0\\
0 & 1 & 1 & 1 & 0\\
0 & 0 & 1 & 1 & 0\\
0 & 0 & 0 & 0 & 1\\
0 & 0 & 0 & 0 & 1
\end{pmatrix}}
 \]
is a  $\{5,9,5\}_{\mathbb{F}_q}$ maximum flag-rank distance code when $q \in \{2,4\}$.
\end{example}

Note that the nine matrices in Example \ref{ex:delta5}  {do not generate} an MFRD code when $q \in \{3,5\}$. However, with the aid of \textsc{magma}, we found sporadic examples of $\{5,9,5\}_{\mathbb{F}_q}$ MFRD code also when $q \in \{3,5\}$.

\medskip

Our study gives rise to plenty of further research lines. We list a few of them.
\begin{enumerate}
    \item[(1)] The Singleton-like bound of Theorem \ref{th:Singleton_codimension} is not tight in general. Providing other constructions for MFRD codes with some values of $\delta$ will establish other instances where the mentioned bound is met with equality. 
    \item[(2)] On the other hand, for $\delta=4$ the Singleton-like bound is not met with equality when the field size is small compared to $n$, as shown in Theorem \ref{thm:d=4en<=}. This motivates the investigation of new bounds which depend on the field size, in the spirit of what happens with the Griesmer bound for codes in the Hamming metric. 
    \item[(3)] Again on the Singleton-like bound, Remark \ref{rem:nonexistence} highlights the fact there are no optimum-distance MFRD codes in $\U(n,\F)$ over algebraically closed fields, for $n\geq 3$. Thus, over an algebraically closed field $\overline{\F}$ there must be a stronger bound. A natural bound on the parameters of an $\{n,t,\delta\}_{\overline{\F}}$ is 
    $$ t\leq \frac{n(n+1)}{2}- \dim(\mathcal V_{\delta-1}^n),$$
    where $\mathcal V_{\delta-1}^n$ is the variety of all the elements in $\U(n,\overline{\F})$ of flag-rank weight at most $\delta-1$. 
    This is the same idea that yields a stronger bound for rank-metric codes over algebraically closed fields, which was also shown to be tight for every set of parameters; see \cite{roth1991maximum}. Thus, it would be interesting to determine the dimension of the algebraic variety $\mathcal V_{\delta-1}^n$ and study whether the new derived bound is tight. A first easy case we can derive is when $n=2k-1$ and $\delta=k^2$ (that is, optimum-distance flag-rank-metric codes for odd $n$). By Remark \ref{rem:nonexistence}, the variety $\mathcal V_{k^2}^{2k-1}$ is the direct sum of two copies of $\U(k-1,\overline{\F})$ (the top-left block and the bottom-right one) with the determinantal variety $\mathcal D_{k-1}^{k}$ of all $k\times k$ matrices over $\overline{\F}$ with rank at most $k-1$. The latter is known to have dimension $k^2-1$ (\cite[Theorem 2.1]{westwick1972spaces}), and, therefore, for a $\{2k-1,t,k^2\}_{\overline{\F}}$ code, we deduce that 
    $$ t \leq k(2k-1)-(k-1)k-(k^2-1)=1.$$
    \item[(4)] Analyze and possibly characterize the $\F$-linear isometries of $(\U(n,\F), \dd_{\textnormal{fr}})$.
    \item[(5)] Remark \ref{rem:fieldsize} leaves  different questions {open}.
\begin{itemize}
\item Is it possible to construct MFRD codes with $\delta=4$ over a field $\F$ where $\sqrt{n-1}\le |\F|<n-1$ (or $|\F|<n-2$ when $n=2^h+2$)?
\item Is it possible to construct MFRD codes with $\delta=4$ over a field $\F$ that has no degree $2$ extension? This is for instance the case of algebraically closed field, as well as other pathological cases.
\end{itemize}
\end{enumerate}

\bigskip

\bibliographystyle{abbrv}
\bibliography{references.bib}

\end{document}